\documentclass[final,leqno]{siamltex}

\usepackage{amsfonts}
\usepackage{graphicx}
\usepackage{epstopdf}
\ifpdf
  \DeclareGraphicsExtensions{.eps,.pdf,.png,.jpg}
\else
  \DeclareGraphicsExtensions{.eps}
\fi

\usepackage{amsmath,amssymb}
\usepackage{listings,xcolor}
\usepackage{caption, subcaption}
\usepackage{algorithm,algpseudocode}
\algrenewcommand\algorithmicrequire{\textbf{Input:}}
\algrenewcommand\algorithmicensure{\textbf{Output:}}
\usepackage{multirow, multicol}
\usepackage{bm}
\usepackage{newclude}
\usepackage{cases}
\usepackage{lineno}
\newtheorem{remark}{Remark} 
\newtheorem{assumption}{Assumption} 

\begin{document}

\title{Improved high-index saddle dynamics for finding saddle points and solution landscape\thanks{This work was supported by the National Natural Science Foundation of China (No.12225102, T2321001, 12288101, 12301520, 12301555), the Taishan Scholars Program of Shandong Province (No. tsqn202306083), and the National Key Research and Development Program of China (No. 2023YFA1008903).}}

\author{Hua Su\thanks{Beijing International Center for Mathematical Research, Peking University, Beijing, 100871, China. ({suhua@pku.edu.cn}).}
\and Haoran Wang\thanks{School of Mathematical Sciences, Peking University, Beijing, 100871, China.  ({sdrz\_whr@163.com}).}
\and Lei Zhang\thanks{Corresponding author. Beijing International Center for Mathematical Research, Center for Quantitative Biology, Center for Machine Learning Research, Peking University, Beijing, 100871, China. ({zhangl@math.pku.edu.cn}).}
\and Jin Zhao\thanks{Academy for Multidisciplinary Studies, Capital Normal University, and Beijing National Center for Applied Mathematics, Beijing, 100048, China.
  ({zjin@cnu.edu.cn}).}
\and Xiangcheng Zheng\thanks{School of Mathematics, Shandong University, Jinan, 250100, China. ({xzheng@sdu.edu.cn})}
}

\maketitle

\begin{abstract}
We present an improved high-index saddle dynamics (iHiSD) for finding saddle points and constructing solution landscapes, which is a crossover dynamics from gradient flow to traditional HiSD such that the Morse theory for gradient flow could be involved. We propose analysis for the reflection manifold in iHiSD, and then prove its stable and nonlocal convergence from outside of the region of attraction to the saddle point, which resolves the dependence of the convergence of  HiSD on the initial value. We then present and analyze a discretized iHiSD that inherits these convergence properties. Furthermore,
based on the Morse theory, we prove that any two saddle points could be connected by a sequence of trajectories of iHiSD. Theoretically, this implies that a solution landscape with a finite number of stationary points could be completely constructed by means of iHiSD, which partly answers the completeness issue of the solution landscape for the first time and indicates the necessity of integrating the gradient flow in HiSD. Different methods are compared by numerical experiments to substantiate the effectiveness of the iHiSD method.
\end{abstract}

\begin{keywords}
   saddle point, saddle dynamics, solution landscape, Morse theory, gradient flow, convergence
\end{keywords}

\begin{AMS}
37M05, 37N30, 37D15, 65L20
\end{AMS}

\pagestyle{myheadings}
\thispagestyle{plain}
\markboth{Improved High-index Saddle Dynamics}{H. Su, H. Wang, L. Zhang, J. Zhao and X. Zheng}

\section{Introduction}
Finding stationary points of complex systems has attracted increasing attentions in various applications such as protein folding \cite{protein1}, superconductivity \cite{superconductivity} and liquid crystals \cite{shi2024}. The solution landscape of complex systems, which consists of all stationary points and their connections \cite{HiSD_PRL}, provides a comprehensive perspective and helps to understand the underlying mechanisms, see e.g. \cite{shi2023nonlinearity, shi2022siap,Wang_Acta2021}. 
Mathematically, a stationary point $x^*$ of an energy function $E(x)$ satisfies $\nabla E(x^*) = 0$. 
In the context of Morse theory \cite{Morse_Milnor}, the stationary points are further classified by the Morse index, which is defined as the negative inertia index of the Hessian matrix $G(x^*) = \nabla\nabla E(x^*)$. In practice, the complex geometric structures of the problems and the unstable nature of saddle points introduce significant challenges in numerical computations, which motivates extensive investigations on designing searching algorithms.

In general, the existing methods of locating saddle points include,  but are not limited to, the path-finding methods such as the nudged elastic band method \cite{nudged,Jnsson1998NudgedEB} and the string method \cite{string,stringM2007,LiuChe}, the walker-type methods including dimer-type methods \cite{GouOrt,dimer}, gentlest ascent dynamics (GAD) \cite{gad,WittenLaplacians,ShrinkingDimer} and the high-index saddle dynamics (HiSD) \cite{HiSD_PRL,yin2019high,zhang2022sinum,csiam2023}, the iterative minimization formulations \cite{GaoLen,GuZho} and the minimax methods \cite{LiuXieSCM,LiuXie}.
In particular, the HiSD for locating an index-$k$ saddle point determines the search direction by reflecting the gradient along the unstable subspace, as described in the following system
\begin{subnumcases}{\label{HiSD0}}
    \frac{\mathrm{d}x}{\mathrm{d}t}=-\beta\left(I-2\sum_{j=1}^{k}v_jv_j^T\right)\nabla E(x), \label{HiSD_intro0_x}\\
    \frac{\mathrm{d}v_i}{\mathrm{d}t}=-\gamma\left(I-v_iv_i^T-2\sum_{j=1}^{i-1}v_jv_j^T\right)G(x)v_i,\quad 1\le i\le k, \label{HiSD_intro0_v}
\end{subnumcases}
where $\beta, \gamma > 0$ are hyperparameters that adjust the time scale.
By introducing the reflection matrix 
$$R=I-2\sum_{j=1}^{k}v_jv_j^T,$$
 the above system \eqref{HiSD0} is equivalent to the following dynamics, which has a more compact form
\begin{subnumcases}{\label{HiSD}}
    \frac{\mathrm{d}x}{\mathrm{d}t}=-\beta R\nabla E(x), \label{HiSD_intro_x}\\
    \frac{\mathrm{d}R}{\mathrm{d}t}=\gamma(G(x)-RG(x)R).
    \label{HiSD_intro_R}
\end{subnumcases}

Despite wide applications of HiSD in several fields \cite{li2024acta,Yin_Cell2024,Yin_PNAS2021}, 
the convergence of HiSD usually depends heavily on the selection of the initial value. In particular, it was pointed out in \cite{Ortner} that the dimer-type methods may diverge or enter cycles if the initial value is not close enough to the target saddle point. Furthermore, as the solution landscape of complex systems in the literature is usually constructed by employing the HiSD to perform the upward or downward searches from one stationary point to another, which may start from outside of the region of attraction, it is unclear whether all stationary points could be found such that the solution landscape is completely constructed.

To address these issues, we propose an improved high-index saddle dynamics (iHiSD), which integrates the conventional HiSD with gradient flow 
\begin{subnumcases}{\label{iHiSD_intro}}
    \frac{\mathrm{d}x}{\mathrm{d}t}=\beta_1(t)R\nabla E(x)+\beta_2(t)\nabla E(x), \label{iHiSD_intro_x}\\
    \frac{\mathrm{d}R}{\mathrm{d}t}=\gamma(G(x)-RG(x)R). \label{iHiSD_intro_R}
\end{subnumcases}
Here $\beta_1(t)$, $\beta_2(t)$ are time-dependent weight functions and $\gamma$ is a positive parameter. 

The main idea of iHiSD is to integrate the gradient flow in HiSD at the beginning stage since, according to the Morse theory, the gradient flow could connect two saddle points in theory and thus provide possibility of approaching the target saddle point from outside of the region of attraction, which resolves the dependence of the convergence of HiSD on the initial value. Once the trajectory gets close to the target saddle point, the iHiSD switches to the HiSD in order to accurately locate this saddle point. In summary, we expect the iHiSD to overcome the local converge to compute the saddle points and construct the solution landscape by utilizing the advantages of both the gradient flow and the HiSD.

To give a rigorous description, we first propose analysis for the reflection manifold in the iHiSD, and then perform the linear stability analysis of iHiSD. By means of transitive lemma, we prove that the iHiSD exhibits a nonlocal convergence from outside of the region of attraction to the saddle point, which reduces the dependence of the convergence of HiSD on the initial value. A discretized iHiSD is subsequently proposed and analyzed, which inherits these convergence properties. Furthermore,
based on the Morse theory, we prove that any two saddle points could be connected by a sequence of trajectories of iHiSD, which provides possibility of constructing a complete solution landscape (with a finite number of stationary points) through successive upward and downward searches for new saddle points. Different methods are compared in numerical experiments, which indicates the effectiveness of the iHiSD method.

The rest of the paper is organized as follows.
Section 2 introduces preliminary results for future use.
Section 3 performs rigorous analysis for the iHiSD.
Section 4 develops the discrete formulation of iHiSD and proves its convergence.
Section 5 utilizes the Morse theory to demonstrate the effectiveness of iHiSD in constructing solution landscapes. 
Section 6 presents numerical experiments to substantiate the theoretical results. 
A concluding remark is addressed in the last section. 

\section{Preliminaries}
In this section, we present preliminaries to support subsequent analysis.

\paragraph{Morse theory \cite{Morse_Milnor}}
For a function $E$ defined on a Riemannian manifold $\mathcal{M}$,
the {stationary points} are locations on $\mathcal{M}$ where the gradient $\nabla E$ equals to zero.
A stationary point is called {non-degenerate} if the Hessian matrix $G=\nabla\nabla E$ at this point is non-degenerate, and its {(Morse) index} is defined as the negative inertia index of the Hessian.
For brevity, we refer to a stationary point with Morse index $k$ (for $k \geq 0$) as a $k$-saddle.
A function is called a {Morse function} if all  stationary points are non-degenerate, and the
Morse theory is particularly suitable for analyzing the gradient flows of such functions.

\paragraph{Gradient flow, Stable and unstable manifolds \cite{ODE_Teschl}} 
The {gradient flow} of a Morse function $E$ is described by the trajectory followed by 
$$\partial_t \varphi_t(x) = -\nabla E(\varphi_t(x))$$
 with the initial condition $\varphi_0(x) = x$.
If $\varphi_t(x)$ represents a trajectory of the gradient flow and 
$$\lim\limits_{t\to -\infty} \varphi_t(x) = x_1,~~\lim\limits_{t\to +\infty} \varphi_t(x) = x_2,$$
 we say there exists a gradient flow from $x_1$ to $x_2$, indicating that the two points are connected via this flow.

For a general ordinary differential equation (ODE) system $\dot{x} = f(x)$, the stable manifold $W^s(x^*,\varphi)$ (or $W^s(x^*)$ for short) at a fixed point $x^*$ is defined as the set of points $y$ such that $\lim\limits_{t\to +\infty} \varphi_t(y) = x^*$, where $\varphi_t(y)$ denotes the flow of the differential equation starting at $y$.
Conversely, the unstable manifold $W^u(x^*,\varphi)$ (or $W^u(x^*)$ for short) is defined similarly, but for the backward flow: $\lim\limits_{t\to -\infty} \varphi_t(y) = x^*$.
For a hyperbolic fixed point $x^*$, where the Jacobian $Jf(x^*)$ has no eigenvalue with a vanishing real part, the {stable manifold theorem} asserts that the stable and unstable manifolds of $x^*$ are diffeomorphic to Euclidean space, with their dimensions corresponding to the number of eigenvalues of $Jf(x^*)$ with positive and negative real parts, respectively.

\paragraph{Morse-Smale condition \cite{Morse_homology}} 
A Morse function $E$ is called a Morse-Smale function if it satisfies the {Morse-Smale condition}, which requires that the stable and unstable manifolds of any two stationary points intersect transversally.
This property ensures that the gradient flow behaves well, associating the information about stationary points to the geometric properties of the manifold.
It can be shown that for a Morse-Smale function, the gradient flow consistently moves from a stationary point with a higher index to one with a lower index.

\paragraph{Morse's homology theorem \cite{Morse_homology}} 
A fundamental result connecting gradient flow to the topological properties of manifolds is the Morse homology theorem.
It states that for a Morse-Smale function $E$ defined on a compact manifold $\mathcal{M}$, the stationary points of $E$ can be utilized to construct a chain complex whose homology groups are isomorphic to those of $\mathcal{M}$.
Specifically, let \( C^*(E) = \bigoplus_{k \in \mathbb{N}} C_k(E) \), where \( C_k \) is a \( \mathbb{Z}_2 \)-linear space freely generated by the $k$-saddles.
The boundary operator \( \partial_k \) is defined as 
$$\partial_0 = 0,~~ \partial_k: C_k(E) \to C_{k-1}(E) \text{ for } k \geq 1 \text{ with } \partial_k a = \sum_b n(a,b)b,$$
 where the summation runs over all $(k-1)$-saddles and \( n(a,b) \) is the signed count of gradient flows connecting \( a \) and \( b \).
Then the theorem asserts that
$$
H_k(\mathcal{M};\mathbb{Z}_2) \cong H_k(C^*(E)) =\ker\partial_k / \mathrm{im}\ \partial_{k+1},
$$
where \( H_k(\mathcal{M}; \mathbb{Z}_2) \) represents the \( k \)-th homology group of \( \mathcal{M} \) with coefficients in \( \mathbb{Z}_2 \).

If the sublevel set \( \mathcal{M}^c = \{ x : E(x) \leq c \} \) is compact for some \( c \), and there are no stationary points on the level set \( \{ x : E(x) = c \} \), the theorem applies to \( (\mathcal{M}^c, E|_{\mathcal{M}^c}) \). Furthermore, if \( \mathcal{M}^c \) is compact for all \( c \) and contains finitely many stationary points, then \( \mathcal{M} \) is homotopically equivalent to \( \mathcal{M}^c \) for sufficiently large \( c \), and the theorem also holds for \( (\mathcal{M}, E) \).
~\\

\section{Analysis of iHiSD}
In this section, we consider the stability and nonlocal convergence of the iHiSD \eqref{iHiSD_intro} from a theoretical perspective, where $E:\mathbb{R}^n\to\mathbb{R}$ satisfies the following assumptions:\\
  \textit{Assumption A}:  $E$ is a Morse-Smale function with compact sublevel sets, and its Hessian matrix has distinct eigenvalues at each saddle point.\\
Furthermore, the time-dependent functions $\beta_1(t)$ and $\beta_2(t)$ in (\ref{iHiSD_intro_x}) take the form of
$\beta_1(t)=-\beta\cdot\alpha(t)$ and $\beta_2(t)=\pm\beta\cdot(1-\alpha(t))$, where the sign of $\beta_2(t)$ indicates the search direction: a positive value corresponds to an upward search, while a negative value indicates a downward search. In general, such convex combination of $-R\nabla E(x)$ and $\pm\nabla E(x)$ is not essential.
Without loss of generality, let $\alpha(t)$ be tuned by a rate function $\epsilon(\alpha)$, which, together with \eqref{iHiSD_intro}, leads to the following system
\begin{subnumcases}{\label{iHiSD}}
    \dot{x}=\beta (-\alpha R \pm(1-\alpha)I)\nabla E(x), \label{iHiSD_x}\\
    \dot{R}=\gamma(G(x)-RG(x)R), \label{iHiSD_R}\\
    \dot{\alpha}=\epsilon(\alpha). \label{iHiSD_alpha}
\end{subnumcases}
Here $\beta,\gamma>0$ are hyper-parameters that adjust the time scale, and $G=\nabla\nabla E$ denotes the Hessian matrix of function $E$.
The rate function $\epsilon(\alpha)$ in \eqref{iHiSD_alpha} is assumed smooth and satisfy $\epsilon(\alpha)\sim\alpha(1-\alpha)$, (i.e. there exist $C_1,C_2>0$ such that $C_1\alpha(1-\alpha)\le\epsilon(\alpha)\le C_2\alpha(1-\alpha)$), ensuring that $\alpha$ remains within the interval $[0,1]$.
\subsection{Analysis of reflection manifold}
Compared with the gradient flow,  the reflection matrix $R\in\mathbb{R}^{n\times n}$ in iHiSD introduces salient differences.
Specifically, the matrix $R$ resides within the manifold 
\begin{equation}
\label{Grass_k}
    \mathcal{R}_k = \{R \in \mathrm{Sym}_n : R^2 = I, \dim\ker(R+I) = k\},
\end{equation}
which represents an immersed submanifold of the Grassmannian $\mathrm{Gr}_k(\mathbb{R}^n)$
within the space of symmetric matrices $\mathrm{Sym}_n$ equipped with the Frobenius norm.
To examine basic properties of the reflection manifold $\mathcal{R}_k$, we begin by differentiating the equation $R^2 = I$ to obtain the tangent space, expressed as $$ T_R\mathcal{R}_k = \{A \in \mathrm{Sym}_n : AR + RA = 0\}.$$ 
Furthermore, the orthogonal projection from $\mathrm{Sym}_n$ onto $T_R\mathcal{R}_k$ is given by $A \mapsto P_R A = \frac{1}{2}(A - RAR)$, which is validated by
\begin{equation*}
    \langle P_R A, A - P_R A \rangle = \frac{1}{4}\langle A - RAR, A + RAR \rangle = 0\;\implies P_R A\perp(A - P_R A).
\end{equation*}
Consequently, equations \eqref{HiSD_intro_R} and \eqref{iHiSD_R} can be interpreted as optimizing the Frobenius distance with respect to $R$ through gradient descent
\begin{equation*}
    \dot{R}=-\gamma\nabla_R\|R-G\|_F^2=-\gamma P_R(2(R-G))=\gamma(G-RGR).
\end{equation*}
This means that $R=R^{-1}\in \mathcal{R}_k$ serves as an approximation of the Hessian \( G(x) \). 
Hence, by substituting $R^{-1} \approx G$ into \eqref{HiSD} to obtain that $\dot{x} \approx -\beta(G(x))^{-1}\nabla E(x)$, the original HiSD can be viewed as an approximation of the Newton method, with the quasi-Newton direction constraining its nonlocal characteristics.
In contrast, iHiSD, as presented in \eqref{iHiSD}, fundamentally transforms this approach by explicitly incorporating gradient flow into the HiSD framework, enhancing convergence properties in both theoretical and practical aspects.

The following lemma further characterizes the qualitative behavior of $\|R-G\|_F^2$.
\begin{lemma}\label{lemma_minimum}
    Given the matrix $G\in \mathrm{Sym}_n$ with eigenvalues $\lambda_1 \leq \cdots \leq \lambda_n$, the function $\|R-G\|_F^2$ defined on $\mathcal{R}_k$ attains a unique local minimum when $\lambda_k < \lambda_{k+1}$ and a unique local maximum when $\lambda_{n-k-1} < \lambda_{n-k}$.
\end{lemma}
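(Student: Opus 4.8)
The plan is to recast the problem as a trace optimization on the Grassmannian and then classify all critical points. Since $R^2=I$ gives $\|R\|_F^2=\mathrm{tr}(I)=n$ for every $R\in\mathcal{R}_k$, and $\|G\|_F^2$ is fixed, one has $\|R-G\|_F^2=n+\|G\|_F^2-2\,\mathrm{tr}(RG)$, so local minima of $\|R-G\|_F^2$ are exactly local maxima of $\mathrm{tr}(RG)$ and vice versa. Writing $R=I-2P$ with $P=\sum_{j=1}^k v_jv_j^T$ the orthogonal projection onto the $k$-dimensional $(-1)$-eigenspace $V=\ker(R+I)$, we get $\mathrm{tr}(RG)=\mathrm{tr}(G)-2\,\mathrm{tr}(PG)=\mathrm{tr}(G)-2\sum_{j=1}^k v_j^T G v_j$. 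Because $\mathcal{R}_k$ is diffeomorphic to the compact Grassmannian $\mathrm{Gr}_k(\mathbb{R}^n)$, the function attains its global minimum and maximum; the substance of the lemma is therefore the uniqueness of the local extrema, i.e.\ the absence of spurious ones.

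Next I would locate the critical points. Using the projection formula $P_R A=\tfrac12(A-RAR)$ already derived, the Riemannian gradient of $\|R-G\|_F^2$ is $P_R(2(R-G))=RGR-G$, which vanishes precisely when $RGR=G$, equivalently $RG=GR$ (multiply by $R$ and use $R^2=I$). Hence the critical points are exactly the reflections whose $(-1)$-eigenspace $V$ is $G$-invariant, i.e.\ a sum of eigenspaces of $G$, and the corresponding critical value of $\mathrm{tr}(PG)$ is $\sum_{i\in S}\lambda_i$ for the index set $S$ of the selected eigenvalues. By the Ky Fan (Courant--Fischer) principle, $\mathrm{tr}(PG)=\sum_{j}v_j^T G v_j$ is minimized exactly by $\sum_{i=1}^k\lambda_i$, attained when $V$ is spanned by eigenvectors for the $k$ smallest eigenvalues; the gap $\lambda_k<\lambda_{k+1}$ guarantees that this minimizing subspace, namely the sum of all eigenspaces for eigenvalues $\le\lambda_k$, is the unique one of dimension $k$. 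This produces a unique global maximizer of $\mathrm{tr}(RG)$, i.e.\ a unique global minimizer of $\|R-G\|_F^2$.

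To upgrade ``unique global minimizer'' to ``unique local minimizer'', I would run a second-variation (rotation) argument at each critical point, exhibiting a descent direction whenever $R$ is not the global minimizer. Fix a critical $R$ with $(-1)$-eigenspace $V$ and let $u_i\in V$, $u_j\in V^\perp$ be unit eigenvectors with eigenvalues $\lambda_i,\lambda_j$; rotating $u_i$ toward $u_j$ by angle $\theta$ replaces $V$ by a nearby $k$-plane and, since the cross terms $u_i^T G u_j$ vanish, changes the objective by the explicit amount $\|R(\theta)-G\|_F^2-\|R-G\|_F^2=4(\lambda_j-\lambda_i)\sin^2\theta$. Thus $R$ is a local minimum iff $\lambda_i\le\lambda_j$ for every eigenvalue $\lambda_i$ ``inside'' $V$ and every $\lambda_j$ ``outside''; any critical point violating this admits a strict descent direction and is excluded, while the gap $\lambda_k<\lambda_{k+1}$ makes these differences strictly positive at the minimizer, so its Hessian is positive definite along all tangent rotations and it is a strict, isolated local minimum, necessarily the unique one. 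The local-maximum statement then follows by the involution $R\mapsto-R$, which maps $\mathcal{R}_k$ onto $\mathcal{R}_{n-k}$ and sends $\mathrm{tr}(RG)$ to $-\mathrm{tr}(RG)$, reducing the maximization to the minimization case under the corresponding spectral gap.

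I expect the main obstacle to be the transition from global to local uniqueness: one must verify that the elementary rotations span the tangent space $T_R\mathcal{R}_k=\{A\in\mathrm{Sym}_n:AR+RA=0\}$ and that the Hessian is diagonal in this basis, so that the sign test along coordinate rotations is conclusive, and one must handle possible eigenvalue multiplicities using only the single gap $\lambda_k<\lambda_{k+1}$, ensuring that flat directions arising from repeated eigenvalues remain within the critical manifold and do not create additional local minima.
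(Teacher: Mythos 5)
Your proposal is correct, and although it finds the same critical points as the paper (stationarity $\iff RGR=G \iff RG=GR$, so $R$ and $G$ diagonalize simultaneously), it classifies them by a genuinely different route. The paper computes the intrinsic Hessian of $\|R-G\|_F^2$ on $\mathcal{R}_k$ at each critical point and reads off its eigenvalues $\tilde\lambda_j-\tilde\lambda_i$ (with $\tilde\lambda_i$ the eigenvalues selected by $\ker(R+I)$ and $\tilde\lambda_j$ the remaining ones), concluding local minimality exactly when all these differences are positive, with uniqueness forced by the gap $\lambda_k<\lambda_{k+1}$. You instead reduce to maximizing $\mathrm{tr}(RG)$, use compactness of $\mathcal{R}_k$ and the Ky Fan principle to produce and identify the global minimizer, and then exclude every other critical point by an explicit rotation curve whose exact objective change $4(\lambda_j-\lambda_i)\sin^2\theta$ is the integrated form of the paper's Hessian formula. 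This is more elementary pointwise, and it handles eigenvalue multiplicities transparently: when eigenvalues repeat, the critical set is not discrete, but each non-optimal critical point still admits a strict descent rotation, which is exactly what uniqueness of the \emph{local} minimum requires. Two caveats. First, your ``$R$ is a local minimum iff $\lambda_i\le\lambda_j$'' needs both tools: the rotations only give the ``only if'' direction (exclusion of spurious minima), while the ``if'' direction comes from Ky Fan via global minimality; state that division of labor explicitly. Second, carrying your involution $R\mapsto -R$ through honestly, the maximization statement on $\mathcal{R}_k$ reduces to the minimization statement on $\mathcal{R}_{n-k}$, whose gap condition is $\lambda_{n-k}<\lambda_{n-k+1}$; this does \emph{not} match the condition $\lambda_{n-k-1}<\lambda_{n-k}$ in the lemma as stated, which appears to be an off-by-one slip in the paper (for $n=2$, $k=1$ it would refer to the nonexistent $\lambda_0$). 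So do not paper over the phrase ``corresponding spectral gap''---make it explicit, and note the discrepancy rather than forcing agreement with the stated index.
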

\begin{proof}
    The condition for $R$ to be an extremum is that $$\nabla_R\|R-G\|_F^2=0,$$
    or equivalently, $RG=GR,$
    which allows $R$ and $G$ to be orthogonally diagonalized simultaneously
    \begin{equation*}
    \label{GR_simultaneous}
        G = V\begin{bmatrix}\tilde\lambda_1 & & \\ & \ddots & \\ & & \tilde\lambda_n\end{bmatrix}V^\top, \quad
        R = V\begin{bmatrix}e_1 & & \\ & \ddots & \\ & & e_n\end{bmatrix}V^\top.
    \end{equation*}
    Here $\tilde\lambda_1,\cdots,\tilde\lambda_n$ is a permutation of $\lambda_1,\cdots,\lambda_n$.
    Due to the constraint in \eqref{Grass_k},
  we  suppose $e_i=-1$ for $1\le i\le k$ and $e_j=+1$ for $k+1\le j\le n$ without loss of generality.
    Thus, the tangent space $T_R\mathcal{R}_k$ takes the form of
    \begin{equation*}
        T_{R}\mathcal{R}_k=\left\{V\begin{bmatrix}0&B\\B^\top&0\end{bmatrix}V^\top:B\in\mathbb{R}^{k\times(n-k)}\right\}.
    \end{equation*}
    The Hessian $\nabla_R\nabla_R\|R-G\|_F^2$ at the extreme point can now be expressed explicitly by
    \begin{equation*}
        (\delta R\cdot\nabla_R)(\delta R\cdot\nabla_R)\|R-G\|_F^2=\sum_{i=1}^k\sum_{j=k+1}^n\frac12(\tilde\lambda_j-\tilde\lambda_i)\delta R_{ij}^2,
    \end{equation*}
    from which the eigenvalues of $\nabla_R\nabla_R\|R-G\|_F^2$ are found to be $(\tilde\lambda_j-\tilde\lambda_i)$ for $1\le i\le k$ and $k+1\le j\le n$.
    Consequently, $R$ is a local minimum when $\tilde\lambda_i<\tilde\lambda_j$, which corresponds to the condition $\lambda_k<\lambda_{k+1}$. 
    Similarly, $R$ is a local maximum when $\tilde\lambda_i>\tilde\lambda_j$,  or equivalently $\lambda_{n-k-1}<\lambda_{n-k}$. The proof is completed.
\end{proof}

\subsection{Stability and nonlocal convergence}
We present the linear stability of iHiSD.
\begin{theorem}\label{thm_local} 
Suppose the Assumption A holds. The triplet $(x^{*},R^{*},\alpha^{*})$ is an equilibrium point if and only if $\nabla E(x^{*})=0$, $G(x^{*})R^{*}=R^{*}G(x^{*})$ and $\alpha^{*}\in\{0,1\}$.
Moreover, if $G(x^*)$ is non-degenerate and has distinct eigenvalues, then the triplet $(x^*,R^*,\alpha^*)$ is locally stable if and only if $x^*$ is a $k$-saddle, $R^*$ is the reflection about the eigenspaces of $G(x^*)$ corresponding to positive eigenvalues, and $\alpha^*=1$.
\end{theorem}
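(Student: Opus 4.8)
The plan is to treat the two assertions separately, first pinning down the equilibria algebraically and then classifying their stability through the spectrum of the linearization. For the equilibrium characterization, I would read off the three conditions one equation at a time. The relation $\dot\alpha=\epsilon(\alpha^*)=0$ together with $\epsilon(\alpha)\sim\alpha(1-\alpha)$ forces $\alpha^*\in\{0,1\}$; the relation $\dot R=\gamma(G-R^*GR^*)=0$ gives $G=R^*GR^*$, and left-multiplying by $R^*$ and using $R^{*2}=I$ yields the commutation $G(x^*)R^*=R^*G(x^*)$; finally, since for $\alpha^*\in\{0,1\}$ the matrix $-\alpha^*R^*\pm(1-\alpha^*)I$ is invertible (it equals either $\pm I$ or $-R^*$), the relation $\dot x=0$ is equivalent to $\nabla E(x^*)=0$. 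The converse is immediate by substitution, which settles the first statement.

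For the stability statement, the key structural observation is that the Jacobian of \eqref{iHiSD} at an equilibrium is block lower-triangular. Indeed, because $\nabla E(x^*)=0$, differentiating \eqref{iHiSD_x} with respect to $R$ and $\alpha$ produces terms proportional to $\nabla E(x^*)$ that vanish, so the $x$-row couples only to $\delta x$; likewise \eqref{iHiSD_alpha} is autonomous in $\alpha$. Ordering the variables as $(x,R,\alpha)$, the Jacobian therefore has diagonal blocks $A=-\beta\alpha^*R^*G(x^*)\pm\beta(1-\alpha^*)G(x^*)$, the linearization $C$ of the $R$-dynamics on $T_{R^*}\mathcal{R}_k$, and the scalar $d=\epsilon'(\alpha^*)$, with spectrum equal to the union of the three blocks' spectra. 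I would then analyze each block. Since $\epsilon'(0)>0$ and $\epsilon'(1)<0$ (as $\epsilon\sim\alpha(1-\alpha)$), the $\alpha$-block is stable exactly when $\alpha^*=1$, which simultaneously simplifies $A$ to $-\beta R^*G(x^*)$. As $G(x^*)$ is assumed non-degenerate, every relevant eigenvalue is nonzero, so the equilibrium is hyperbolic and the linearization determines stability via the stable manifold theorem.

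With $\alpha^*=1$ I would diagonalize the $x$-block using the equilibrium commutation relation: since $R^*$ and $G(x^*)$ commute and $G(x^*)$ has distinct eigenvalues $\lambda_1<\cdots<\lambda_n$, they share an orthonormal eigenbasis in which $R^*=\mathrm{diag}(e_1,\dots,e_n)$ with $e_i\in\{\pm1\}$. The eigenvalues of $A$ are then $-\beta e_i\lambda_i$, so $A$ is stable if and only if $e_i\lambda_i>0$ for every $i$, i.e. $e_i=\operatorname{sign}(\lambda_i)$. Because $R^*\in\mathcal{R}_k$ carries exactly $k$ entries $e_i=-1$, this is possible precisely when $G(x^*)$ has exactly $k$ negative eigenvalues, so $x^*$ is a $k$-saddle, and the $-1$ eigenspace of $R^*$ coincides with the negative eigenspace of $G(x^*)$, which is the stated reflection about the positive eigenspace. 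For the remaining $R$-block, I would invoke the interpretation $\dot R=-\gamma\nabla_R\|R-G\|_F^2$ established before Lemma~\ref{lemma_minimum}, so that $C=-\gamma\,\mathrm{Hess}_R\|R-G\|_F^2$ at $R^*$; Lemma~\ref{lemma_minimum} then identifies the stable choice of $R^*$ (where the Hessian is positive definite, hence $C$ is stable) as exactly the unique minimizer, whose $-1$ eigenspace sits on the $k$ smallest eigenvalues of $G(x^*)$, consistent with the condition extracted from $A$. Combining the three blocks yields the claimed equivalence.

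The main obstacle I anticipate is the $R$-block: I must justify that the linearization of the constrained flow $\dot R=\gamma(G-RGR)$ on the curved manifold $\mathcal{R}_k$ is genuinely $-\gamma$ times the Hessian computed in Lemma~\ref{lemma_minimum}, being careful that at a critical point the covariant and ambient second-order terms agree on $T_{R^*}\mathcal{R}_k$, and that the coupling block $\partial_x\dot R$ does not enter the spectrum, which is precisely what the block-triangular structure guarantees. A secondary point requiring care is verifying that the conditions extracted independently from the $A$-block and the $C$-block are mutually consistent, both singling out the same reflection $R^*$, so that stable equilibria exist exactly as described rather than being over-determined.
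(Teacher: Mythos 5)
Your proposal is correct and follows essentially the same route as the paper's own proof: both characterize equilibria by reading off the three equations (using invertibility of $-\alpha^* R^*\pm(1-\alpha^*)I$ and $R^*$-multiplication of $G=R^*GR^*$), and both establish stability by exploiting the block lower-triangular Jacobian at $(x^*,R^*,1)$, with diagonal blocks $-\beta R^*G(x^*)$, $-\gamma\nabla_R\nabla_R\|R-G(x^*)\|_F^2$ (whose sign structure is supplied by Lemma~\ref{lemma_minimum}), and $\epsilon'(1)$, followed by simultaneous diagonalization of $R^*$ and $G(x^*)$. Your additional care about ruling out $\alpha^*=0$ via $\epsilon'(0)>0$ and about the mutual consistency of the $x$-block and $R$-block conditions is sound and merely makes explicit what the paper states tersely.
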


\begin{proof}
For the proof of sufficiency, the condition $\nabla E(x^*)=0$ implies that the right-hand side of \eqref{iHiSD_x} vanishes. 
Additionally, the relation $G(x^{*})R^{*}=R^{*}G(x^{*})$ combined with the constraint $(R^{*})^2=I$ ensures that the right-hand side of \eqref{iHiSD_R} also vanishes. 
Furthermore, the assumption $\epsilon(\alpha)\sim\alpha(1-\alpha)$ leads to $\epsilon(\alpha^*)=0$ for $\alpha^{*}\in\{0,1\}$.

Concerning necessity, the assumption on $\epsilon(\alpha)$ implies that $\alpha^{*}\in\{0,1\}$ at equilibrium.
In both cases, $-\alpha^* R^*\pm(1-\alpha^*)I$ is non-degenerate, and thus $\dot{x}=0$ implies $\nabla E(x^*)=0$.
The constraint $(R^*)^2=I$ together with $\dot{R}=0$ imply $G(x^*)R^*=R^*G(x^*)$.
Furthermore, the assumption $\epsilon(\alpha)\sim\alpha(1-\alpha)$ implies $\alpha^*=1$.
The Jacobian of the iHiSD vector field at $(x^*,R^*,\alpha^*)$ can be computed as
\begin{equation*}
    J(x^*,R^*,1)(\delta x,\delta R,\delta\alpha)=
    \begin{pmatrix}
        -\beta R^*G(x^*)\delta x\\
        \gamma(\mathcal{A}^*\delta x-R^*(\mathcal{A}^*\delta x)R^*)-\gamma\nabla_R\nabla_R\|R-G(x^*)\|_F^2\cdot\delta R\\
        \epsilon'(1)\delta\alpha
    \end{pmatrix},
\end{equation*}
where $\mathcal{A}=\nabla G$.
By orthogonally diagonalizing $G(x^*)$ and $R^*$ simultaneously just as that in Lemma \ref{lemma_minimum},
the eigenvalues of \(J(x^*,R^*,\alpha^*)\) could be figured out as $\beta\tilde\lambda_i,\;-\beta\tilde\lambda_j,$ $\gamma(\tilde\lambda_i-\tilde\lambda_j)$ and $\epsilon^\prime(1)$, respectively,
for $1\leq i\leq k$ and $k+1\leq j\leq n$.
For non-degenerate cases where $\lambda_i\neq0$ and $\lambda_i\neq\lambda_j$ for any $i\neq j$,
 $(x^*,R^*,\alpha^*)$ is a stable equilibrium point if and only if $\tilde\lambda_i<0<\tilde\lambda_j$ for $1\leq i\leq k$ and $k+1\leq j\leq n$.
In other words, $x^*$ is a $k$-saddle,
and $\ker(R^*+I)$ corresponds to the eigenspaces of $G(x^*)$ with negative eigenvalues,
i.e., $R^*$ is the reflection about the eigenspaces of $G(x^*)$ with positive eigenvalues. The proof is thus completed.
\end{proof}

We now turn to the analysis of nonlocal convergence, which indicates that the convergence requirement on initial conditions could be relaxed, e.g. it is not necessary for the initial point to locate in the region of attraction of the target saddle point. 
The theoretical analysis relies on the following transitive lemma.

\begin{lemma}[transitive lemma]
\label{lemma_lambda}
Suppose $\dot{x}=f(x)$ is a dynamical system on the manifold $\mathcal{M}$, and $p\in\mathcal{M}$ is a hyperbolic stationary point.
$\mathcal{M}_1,\mathcal{M}_2$ are invariant sub-manifolds.
Suppose $\mathcal{M}_1$ meets $W^s(p)$ transversally at $q$,
and $\mathcal{M}_2$ meets $W^u(p)$ transversally at $r$.
Then, $\mathcal{M}_1\cap\mathcal{M}_2\neq\emptyset$.
\end{lemma}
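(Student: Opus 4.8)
The plan is to recognize this statement as a consequence of the \emph{inclination lemma} (also called the $\lambda$-lemma, consistent with the label of this statement), combined with the invariance of $\mathcal{M}_1,\mathcal{M}_2$ and the robustness of transversal intersections under $C^1$-small perturbation. Since $p$ is hyperbolic, write $s=\dim W^s(p)$ and $u=\dim W^u(p)$ with $s+u=\dim\mathcal{M}=:n$. First I would introduce local coordinates near $p$ in which the flow $\varphi_t$ is linearized (via the stable manifold theorem together with local hyperbolicity), so that $W^s_{\mathrm{loc}}(p)$ and $W^u_{\mathrm{loc}}(p)$ become complementary coordinate subspaces $\mathbb{R}^s\times\{0\}$ and $\{0\}\times\mathbb{R}^u$ meeting transversally at $p$, with $W^s_{\mathrm{loc}}(p)\cap W^u_{\mathrm{loc}}(p)=\{p\}$.

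Next, since $\mathcal{M}_1$ meets $W^s(p)$ transversally at $q$, I would extract an embedded $u$-dimensional disk $D_1\subseteq\mathcal{M}_1$ through $q$ that is still transverse to $W^s(p)$ (possible because $u=\mathrm{codim}\,W^s(p)$, so a $u$-dimensional subspace of $T_q\mathcal{M}_1$ complementary to $T_qW^s(p)$ exists). The $\lambda$-lemma then asserts that the forward images $\varphi_t(D_1)$ contain disks converging in the $C^1$ topology to a disk in $W^u(p)$ centered at $p$ as $t\to+\infty$. Because $\mathcal{M}_1$ is invariant, $\varphi_t(D_1)\subseteq\mathcal{M}_1$ for all $t$, so $\mathcal{M}_1$ itself contains, arbitrarily close to $p$, a disk that is $C^1$-close to $W^u_{\mathrm{loc}}(p)$. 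Applying the dual $\lambda$-lemma to an $s$-dimensional disk $D_2\subseteq\mathcal{M}_2$ through $r$ transverse to $W^u(p)$, the backward images $\varphi_{-t}(D_2)\subseteq\mathcal{M}_2$ accumulate in $C^1$ onto $W^s_{\mathrm{loc}}(p)$, so $\mathcal{M}_2$ likewise contains near $p$ a disk that is $C^1$-close to $W^s_{\mathrm{loc}}(p)$.

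The final step is to convert this $C^1$-proximity into a genuine intersection. In the coordinates above, represent the relevant piece of $\mathcal{M}_1$ near $p$ as a graph $x^s=g(x^u)$ and the piece of $\mathcal{M}_2$ near $p$ as a graph $x^u=h(x^s)$, where $g,h$ together with their differentials are small by the $C^1$-convergence. An intersection point then solves $x^s=g(h(x^s))$; since $g\circ h$ has arbitrarily small Lipschitz constant on a fixed neighborhood of $p$, the Banach fixed-point theorem produces a solution, yielding a point of $\mathcal{M}_1\cap\mathcal{M}_2$ near $p$. Conceptually this is just the standard fact that the complementary-dimensional transversal intersection $W^s_{\mathrm{loc}}(p)\cap W^u_{\mathrm{loc}}(p)=\{p\}$ persists under $C^1$-small perturbations of the two factors.

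I expect the main obstacle to lie in the last step, namely the careful bookkeeping needed to guarantee that the $\lambda$-lemma disks can \emph{simultaneously} be written as graphs of the two complementary coordinate projections, and that the resulting fixed-point map is a contraction on a neighborhood of $p$ whose size does not shrink as the flow time grows large. Verifying the standing hypotheses of the $\lambda$-lemma in this general (non-gradient) setting — that $\varphi_t$ is defined for the required forward and backward times along $D_1,D_2$, and that hyperbolicity indeed furnishes the local linearization used above — also needs attention, but these are routine once the invariance of $\mathcal{M}_1,\mathcal{M}_2$ and the hyperbolicity of $p$ are in hand.
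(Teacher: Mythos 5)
Your proposal is correct, but it takes a genuinely different route from the paper's own proof. You invoke the inclination ($\lambda$-)lemma as a black box: a $u$-dimensional disk $D_1\subset\mathcal{M}_1$ through $q$ is flowed forward so that, by invariance, $\mathcal{M}_1$ contains disks $C^1$-close to $W^u_{\mathrm{loc}}(p)$; dually, $\mathcal{M}_2$ contains disks $C^1$-close to $W^s_{\mathrm{loc}}(p)$; and the two graphs $x^s=g(x^u)$, $x^u=h(x^s)$ are then intersected by a Banach fixed-point argument applied to $g\circ h$. The paper never cites the $\lambda$-lemma; instead it proves the needed hyperbolic estimate inline. It straightens $W^s(p)$ and $W^u(p)$ in a chart near $p$, flows $q$ (and $r$) into that chart, writes $\mathcal{M}_1$ near $q$ and $\mathcal{M}_2$ near $r$ as graphs $X^{s,1}=g(X^{s,2},X^u)$ and $X^{u,2}=h(X^s,X^{u,1})$ via the implicit function theorem, and then sets up a single coupled map $\Psi_t$ in the free variables $(Y^{s,1},Y^{u,2})$ whose contraction constant is driven to zero by $\|\exp(A_st)\|,\|\exp(-A_ut)\|\to0$; its fixed point is a point $a\in\mathcal{M}_1$ with $\varphi_t(a)\in\mathcal{M}_2$, and invariance of $\mathcal{M}_1$ finishes the argument. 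In substance both proofs exploit the same mechanism (contraction/expansion transverse to the invariant manifolds plus a fixed point), but yours is modular and short given a standard reference, while the paper's is self-contained and keeps the two manifolds at $q$ and $r$, coupling them through the flow map rather than first accumulating both onto the local invariant manifolds at $p$.

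One wording caveat: you say you choose coordinates in which the flow is \emph{linearized}. Genuine $C^1$ linearization near a hyperbolic point is not available in general (Hartman--Grobman gives only a topological conjugacy, and smooth linearization requires non-resonance conditions). What you actually need, and what the stable manifold theorem does give, is a chart in which $W^s_{\mathrm{loc}}(p)$ and $W^u_{\mathrm{loc}}(p)$ are complementary coordinate subspaces, so that the flow equals its linear part plus a residual whose derivative vanishes at $p$ --- exactly the decomposition the paper uses. Since neither the $\lambda$-lemma nor your graph-intersection step needs more than this, the slip is harmless, but the phrase should be corrected.
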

\begin{proof}
Let $(U,X)$ be a chart near $p$ with the coordinates $X$ decomposed as
$X = (X^s, X^u)$ such that 
$$
W^u(p) \cap U = \{X \in U : X^s = 0\},\;W^s(p) \cap U = \{X \in U : X^u = 0\}.
$$
Thus, the Jacobian matrix at point $p$ can be expressed blockwise as 
$$
A = Jf(p) = \begin{bmatrix} A_s & 0 \\ 0 & A_u \end{bmatrix},
$$
where $A_s$ and $A_u$ are matrices whose spectrums
lie in the left and right half plane, respectively.
Correspondingly, the flow $\varphi_t(X)$ of $\dot{x}=f(x)$ can be decomposed into the linear term and the residual term
$$
\varphi_t(X)=\left(\exp(A_st)X^s+\eta^s_t(X),\;\exp(A_ut)X^u+\eta^u_t(X)\right),
$$
where the residual 
$\eta_t(X) = (\eta^s_t(X), \eta^u_t(X))$
satisfies $\nabla_X \eta_t(0) = 0$.

Without loss of generality, we can assume that $q\in U$, otherwise we can replace it with $\varphi_T(q)$ which is close to $p$ with large $T$, while keeping the transversality condition between $\mathcal{M}_1$ and $W^s(p)$.
Since $T_qW^s(p)=\operatorname{span}\left\{\frac{\partial}{\partial X^s}\right\}$ and $T_q\mathcal{M}=T_q\mathcal{M}_1+T_qW^s(p)$,
we can decompose the coordinates $X^s$ into $X^s=(X^{s,1},X^{s,2})$ such that
$$
T_q\mathcal{M}=T_q\mathcal{M}_1\oplus\operatorname{span}\left\{\frac{\partial}{\partial X^{s,1}}\right\}.
$$
The implicit function theorem implies that there exists some neighborhood of $q$, denoted by $U_q$, such that
\begin{equation}
\label{def:g}
    \mathcal{M}_1\cap U_q=\{X\in U_q: X^{s,1}=g(X^{s,2},X^u)\}
\end{equation}
with $g\in\mathcal{C}^1.$
Similarly, we can decompose $X^u=(X^{u,1},X^{u,2})$ such that
$$T_r\mathcal{M}=T_r\mathcal{M}_2\oplus\operatorname{span}\left\{\frac{\partial}{\partial X^{u,2}}\right\}$$ and that at the neighborhood of $r$
\begin{equation}
\label{def:h}
    \mathcal{M}_2\cap V_r=\{X\in V_r: X^{u,2}=h(X^s,X^{u,1})\}
\end{equation}
where $h\in\mathcal{C}^1.$

We shall prove $\mathcal{M}_1\cap\mathcal{M}_2\neq\emptyset$ by finding a point $a\in\mathcal{M}_1$ such that $\varphi_t(a)\in\mathcal{M}_2$ for some $t\in\mathbb{R}$. Once we achieve that, and by noticing that $\mathcal{M}_1$ is invariant under the flow $\varphi$, we can conclude from $a\in\mathcal{M}_1$ that $\varphi_t(a)\in\mathcal{M}_1$, and consequently $\mathcal{M}_1\cap\mathcal{M}_2\neq\emptyset$.

Let us conduct a rigorous examination of the point $a$ defined by
\begin{equation}
\label{Ft}
    a = \mathcal{F}_t(Y^{s,1},Y^{u,2}) := \left(Y^{s,1}, X^{s,2}(q), \exp(-A_ut)(X^{u,1}(r), Y^{u,2})\right).
\end{equation}
Here, $Y^{s,1}$ and $Y^{u,2}$ should be picked carefully such that the point $a$ meets our requirement.
By definition \eqref{def:g}, $a\in\mathcal{M}_1$ is equivalent to
\begin{equation}
\label{Ys}
    Y^{s,1}=g\left(X^{s,2}(q),\exp(-A_ut)\left(X^{u,1}(r),Y^{u,2}\right)\right).
\end{equation}
Similarly, by definition \eqref{def:h}, $\varphi_t(a)\in\mathcal{M}_2$ is equivalent to
\begin{equation}
\label{Yu}
    Y^{u,2}+\eta^{u,2}_t(a)=h\left(\exp(A_st)\left(Y^{s,1},X^{s,2}(q)\right)+\eta^s_t(a),X^{u,1}(r)+\eta^{u,1}_t(a)\right).
\end{equation}
The equations \eqref{Ys}--\eqref{Yu} are equivalent to the fix point of the following mapping $\Psi_t\in\mathcal{C}^1$
\begin{align*}
&\Psi_t(Y^{s,1},Y^{u,2})=\\
&\begin{pmatrix}
g\left(X^{s,2}(q),\exp(-A_ut)\left(X^{u,1}(r),Y^{u,2}\right)\right)\\
-\eta^{u,2}_t(a)+h\left(\exp(A_st)\left(Y^{s,1},X^{s,2}(q)\right)+\eta^s_t(a),X^{u,1}(r)+\eta^{u,1}_t(a)\right)
\end{pmatrix}.
\end{align*}
We shall prove its existence
by showing that $\Psi_t$ is a contraction mapping.
First, it can be checked by definition that $\Psi_t$ is locally Lipschitz continuous with
\begin{align*}
\|\Psi_t\|_{\mathrm{Lip}}\leq&\|g\|_{\mathrm{Lip}}\|\exp(-A_ut)\|+\|\eta_t\|_{\mathrm{Lip}}\|\mathcal{F}_t\|_{\mathrm{Lip}}\\
&\qquad+\|h\|_{\mathrm{Lip}}\left(\|\exp(A_st)\|+\|\eta_t\|_{\mathrm{Lip}}\|\mathcal{F}_t\|_{\mathrm{Lip}}\right),
\end{align*}
where the notation $||\cdot||_{\text{Lip}}$ refers to the Lipschitz continuity constant.
Since $\nabla_X\eta_t(0) = 0$, we can shrink the domain of interest $U$ such that $\|\eta_t\|_{\mathrm{Lip}(U)}$ is almost zero.
Furthermore, by definition \eqref{Ft} we have
$\|\mathcal{F}\|_{\mathrm{Lip}}\leq 1+\|\exp(-A_u t)\|$,
and thus for sufficiently large $t$,
we can make $\|\Psi_t\|_{\mathrm{Lip}(U)}$ almost zero.

In addition, a sufficiently large $t$ also ensures that $\Psi_t$ maps the neighborhood of $(X^{s,1}(q),X^{u,2}(r))$ into itself. This can be verified through
\begin{align*}
    \lim_{t\to\infty}\mathcal{F}_t(X^{s,1}(q),X^{u,2}(r))&=q,\\
    \lim_{t\to\infty}\Psi_t(X^{s,1}(q),X^{u,2}(r))&=(X^{s,1}(q),X^{u,2}(r)).
\end{align*}

Finally,
by restricting on a sufficiently small domain $U$,
and by choosing a sufficiently large $t$,
we have made $\Psi_t$ a contraction mapping on the neighborhood of $(X^{s,1}(q),X^{u,2}(r))$.
Take its fixed point and we have obtained a solution of equations \eqref{Ys} and \eqref{Yu},
which results in the expected element $a \in \mathcal{M}_1$ with $\varphi_t(a) \in \mathcal{M}_2$. The proof is thus completed.
\end{proof}

With the assistance of the transitive lemma, we present the nonlocal convergence result in the following theorem that could resolve the dependence of the convergence of HiSD on the initial value.

\begin{theorem}[nonlocal convergence]\label{thm_nonlocal}
Consider two stationary points $x_1$ and $x_2$ of $E$, which satisfies the Assumption A, and suppose $x_2$ has index $k$. 
If a path of $\dot{x} = \pm \nabla E(x)$ (following the sign convention in  \eqref{iHiSD_x}) exists from $x_1$ to $x_2$, then there is a solution $(x(t), R(t), \alpha(t))$ of iHiSD \eqref{iHiSD} that approximates this path, with 
$\lim\limits_{t \to -\infty} x(t) = x_1$ and $\lim\limits_{t \to +\infty} x(t) = x_2.$
\end{theorem}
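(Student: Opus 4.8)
The plan is to lift the given gradient-flow heteroclinic orbit into the extended phase space $(x,R,\alpha)$ of the iHiSD \eqref{iHiSD} and to produce the asserted connection by a single application of the transitive lemma (Lemma \ref{lemma_lambda}). The geometric picture I would exploit is that the slice $\{\alpha=0\}$ is invariant and that there the $x$-component decouples to the pure gradient flow $\dot x=\pm\beta\nabla E(x)$, while the slice $\{\alpha=1\}$ carries the HiSD dynamics \eqref{HiSD}; moreover, along the vertical segment $\{(x_2,R^*,\alpha):\alpha\in[0,1]\}$ one has $\nabla E(x_2)=0$ and $G(x_2)R^*=R^*G(x_2)$, so $\dot x=\dot R=0$ and only $\dot\alpha=\epsilon(\alpha)>0$ survives. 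This segment is therefore itself a heteroclinic orbit running from $\alpha=0$ up to $\alpha=1$, where $R^*$ is the reflection about the positive eigenspaces of $G(x_2)$ singled out in Theorem \ref{thm_local}.

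Concretely, I would introduce three equilibria: the source $q_1=(x_1,R_1,0)$, where $R_1$ is the backward limit of the $R$-component along the lifted orbit; the intermediate point $p=(x_2,R^*,0)$; and the target $q_2=(x_2,R^*,1)$. The first step is to check that $p$ is hyperbolic. Because the $\dot x$ block linearizes to $\pm\beta G(x_2)$, the $\dot R$ block to the negative Hessian of $\|R-G(x_2)\|_F^2$ whose eigenvalues are $-\gamma(\tilde\lambda_j-\tilde\lambda_i)$ by Lemma \ref{lemma_minimum}, and the $\dot\alpha$ block to $\epsilon'(0)>0$, the non-degeneracy and distinctness of the eigenvalues of $G(x_2)$ in Assumption A guarantee that no eigenvalue has vanishing real part. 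I would then take the invariant submanifolds of Lemma \ref{lemma_lambda} to be $\mathcal{M}_1=W^u(q_1)$ and $\mathcal{M}_2=W^s(q_2)$, so that any point of $\mathcal{M}_1\cap\mathcal{M}_2$ automatically yields a trajectory with $\lim_{t\to-\infty}x(t)=x_1$ and $\lim_{t\to+\infty}x(t)=x_2$, which is exactly the required connection.

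It remains to verify the two transversality hypotheses. At the point $r=(x_2,R^*,\alpha_0)$ on the vertical segment, $W^u(p)$ meets $W^s(q_2)$ transversally for a trivial reason: by Theorem \ref{thm_local} the target $q_2$ is a stable equilibrium, so $W^s(q_2)$ is open and $T_rW^s(q_2)$ is the full tangent space, whence transversality is automatic. The substantive hypothesis is the transversal meeting of $\mathcal{M}_1=W^u(q_1)$ with $W^s(p)$ at a point $q$ of the lifted gradient orbit. I would obtain this by noting that the lifted orbit $(\varphi_t,R(t),0)$ lies entirely in the invariant slice $\{\alpha=0\}$, that its $x$-projection is precisely the given gradient-flow heteroclinic whose stable and unstable manifolds intersect transversally by the Morse--Smale condition in Assumption A, and that the transverse $\alpha$-direction (unstable at $p$ since $\epsilon'(0)>0$) together with the $R$-directions (transversal because $R^*$ is a non-degenerate critical point of $\|R-G(x_2)\|_F^2$ by Lemma \ref{lemma_minimum}) split off cleanly, so that the extended $W^u(q_1)$ and $W^s(p)$ meet transversally at $q$.

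With both transversality conditions in hand, Lemma \ref{lemma_lambda} yields $\mathcal{M}_1\cap\mathcal{M}_2=W^u(q_1)\cap W^s(q_2)\neq\emptyset$, and any point of this intersection generates the sought trajectory $(x(t),R(t),\alpha(t))$ of iHiSD with the prescribed limits. To see finally that this trajectory \emph{approximates} the gradient-flow path, I would invoke the assumption $\epsilon(\alpha)\sim\alpha(1-\alpha)$: since $\dot\alpha$ is quadratically small near $\alpha=0$, the trajectory lingers near the invariant slice $\{\alpha=0\}$ — where the $x$-dynamics is exactly the gradient flow — for an arbitrarily long time before $\alpha$ rises to $1$ and the HiSD part pins the orbit down at $x_2$, so its $x$-component tracks the given path as closely as desired. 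The main obstacle I anticipate is the lifting of the Morse--Smale transversality from the pure gradient flow to the coupled $(x,R,\alpha)$ system at $q$: one must control the $R$-dynamics along the whole non-compact orbit and confirm that the added $R$- and $\alpha$-directions neither collapse nor tilt the tangent spaces so as to spoil transversality, which is precisely where the distinct-eigenvalue hypothesis and the gradient-descent characterization of $R$ from Lemma \ref{lemma_minimum} do the real work.
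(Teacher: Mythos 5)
Your overall strategy coincides with the paper's own proof: the same three equilibria $(x_1,R_1,0)$, $(x_2,R^*,0)$, $(x_2,R^*,1)$ (your $R^*$ is the paper's $R_2$, the minimizer of $\|R-G(x_2)\|_F^2$ from Lemma \ref{lemma_minimum}), the same vertical heteroclinic segment in $\alpha$, the same trivial transversality at the stable endpoint, and a single application of Lemma \ref{lemma_lambda} with $\mathcal{M}_1=W^u(q_1)$ and $\mathcal{M}_2=W^s(q_2)$. However, there is a genuine gap at the first step: you never establish that the lifted orbit exists with the limits you need, namely that some solution $R(t)$ of $\dot R=\gamma(G(x(t))-RG(x(t))R)$ over the given gradient path satisfies $R(t)\to R^*$ as $t\to+\infty$ and converges backward to an equilibrium reflection $R_1$. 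This is not automatic: along the lift, the $R$-equation is an asymptotically autonomous gradient flow of $\|R-G(x(t))\|_F^2$ on $\mathcal{R}_k$, and its forward limit is a priori only some reflection commuting with $G(x_2)$, of which there are $\binom{n}{k}$ when the eigenvalues are distinct — not necessarily the stable one $R^*$. If the limit were a different commuting reflection $\tilde R$, the orbit would lie in $W^s((x_2,\tilde R,0))$, the endpoint $(x_2,\tilde R,1)$ would no longer be a stable equilibrium by Theorem \ref{thm_local}, and both your ``automatic'' transversality at $r$ and the dimension counts would collapse. The paper closes this gap with a genericity (Sard-type) argument: the stable manifolds of the non-minimizing reflections together with the unstable manifolds of the non-maximizing ones form a measure-zero set, so a choice of lift can be made that converges forward to $R_2$ and backward to the local-maximum reflection $R_1$ simultaneously. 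Note that taking $R_1$ to be specifically the local maximum of $\|R-G(x_1)\|_F^2$, rather than merely ``the backward limit,'' is what makes the later dimension counts come out right.

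Second, the substantive transversality of $W^u(q_1)$ with $W^s(p)$ at $q$ is asserted (``split off cleanly'') rather than proved; you flag this yourself as the main obstacle, but its resolution is exactly where the content of the paper's proof lies. The paper establishes the exact identities $T W^s(p)=T W^s(x_2)\times T\mathcal{R}_k\times\{0\}$ and $T W^u(q_1)=T W^u(x_1)\times T\mathcal{R}_k\times\mathbb{R}$: the inclusion for $W^s(p)$ uses the monotonicity of $\alpha$ (any orbit with $\alpha(t)\to 0$ forward in time must have $\alpha\equiv 0$, so its $x$-component is a pure gradient orbit lying in $W^s(x_2)$), and equality then follows from a dimension count based on the number of unstable directions at $p$ — a count that depends precisely on $R^*$ being the minimizing and $R_1$ the maximizing reflection. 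Only with these identities in hand does the Morse--Smale condition on the $x$-components, $T W^u(x_1)+T W^s(x_2)=\mathbb{R}^n$, yield the transversality hypothesis of Lemma \ref{lemma_lambda}. Your sketch names the right ingredients, but without the existence-of-lift argument and the tangent-space computation, the hypotheses of the transitive lemma remain unverified.
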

\begin{remark}
This theorem assumes the existence of a gradient flow between stationary points, which will be justified when constructing the solution landscapes, cf the proof of Theorem \ref{thm_complete}. 
\end{remark}

\begin{proof}
According to Lemma \ref{lemma_minimum}, denote $R_1$ as the unique local maximum of $\|R-G(x_1)\|_F^2$ and $R_2$ as the unique local minimum of $\|R-G(x_2)\|_F^2$.

The proof consists of two main parts: we will show that for the iHiSD system \eqref{iHiSD}, there exists a trajectory from $q=(x_1,R_1,0)$ to $p=(x_2,R_2,0)$, and another trajectory from $p=(x_2,R_2,0)$ to $r=(x_2,R_2,1)$, along with certain transversality conditions. 
Consequently, Lemma \ref{lemma_lambda} implies the existence of a trajectory from $q=(x_1,R_1,0)$ to $r=(x_2,R_2,1)$, which is precisely what we aim to establish.

First, we consider the case that $\alpha(t)\equiv0$, resulting in the reduced system
\begin{equation}
\label{decouple}
\left\{
\begin{aligned}
\dot x&=\pm\beta\nabla E(x),\\
\dot R&=\gamma(G(x)-RG(x)R).\end{aligned}
\right.
\end{equation}
We assert that there exists a path of \eqref{decouple} such that
\begin{equation*}
    \lim_{t\to-\infty}(x(t),R(t))=(x_1,R_1),\
    \lim_{t\to+\infty}(x(t),R(t))=(x_2,R_2).
\end{equation*}
Indeed, we can choose $x(t)$ to be the gradient flow connecting $x_1$ and $x_2$, which forms a one-dimensional manifold. 
The unique stable equilibrium point of \eqref{decouple} is $(x_2, R_2)$, while the unique reversibly stable equilibrium point is $(x_1, R_1)$. 
According to Sard's theorem \cite{Sard1942TheMO}, the union of the stable manifolds of equilibrium points, excluding $(x_2, R_2)$, and the unstable manifolds of equilibrium points, excluding $(x_1, R_1)$, constitutes a set of measure zero. 
Therefore, there exists a corresponding solution $R(t)$ such that $\lim_{t \to -\infty} R(t) = R_1$ and $\lim_{t \to +\infty} R(t) = R_2$. 

Denote the flow generated by iHiSD \eqref{iHiSD} as $\varphi$; thus, we have proved that
\begin{equation}
\label{trans_cond}
W^s((x_2,R_2,0),\varphi)\cap W^u((x_1,R_1,0),\varphi)\neq\emptyset.
\end{equation}

Next comes the transversality condition.
We assert that
\begin{equation}
\label{tangent_space_1}
    T_{(x_0,R_0,0)}W^s((x_2,R_2,0),\varphi)=T_{x_0}W^s(x_2)\times T_{R_0}\mathcal{R}_k\times\{0\},
\end{equation}
where $W^s(x_2)$ denotes the stable manifold of $x_2$ corresponding to $\dot x=\pm\beta\nabla E(x)$.

For the inclusion part, consider any path $(x(t),R(t),\alpha(t))$ of iHiSD \eqref{iHiSD} that lies on the stable manifold, meaning that $\lim\limits_{t\to+\infty}x(t)=x_2$, $\lim\limits_{t\to+\infty}\alpha(t)=0$.
The assumption on $\epsilon(\alpha)$ implies that $\alpha(t) \equiv 0$, yielding $\dot{x} = \pm \beta \nabla E(x)$ in \eqref{iHiSD_x}. Thus, $x(t) \in W^s(x_2)$, and hence $$T_{(x_0, R_0, 0)} W^s((x_2, R_2, 0), \varphi) \subseteq T_{x_0} W^s(x_2) \times T_{R_0} \mathcal{R}_k \times \{0\}.$$



The converse part relies on the dimensional equality.
According to the proof of Theorem \ref{thm_local} and Lemma \ref{lemma_minimum}, $(x_2,R_2,0)$ has $(k+1)$ unstable directions and thus
\begin{align*}
    &\dim T_{(x_0,R_0,0)}W^s((x_2,R_2,0),\varphi)\\
    &\qquad=(n+k(n-k)+1)-(k+1)=(n-k)+k(n-k)\\
    &\qquad=\dim (T_{x_0}W^s(x_2)\times T_{R_0}\mathcal{R}_k\times\{0\}).
\end{align*}
Therefore, \eqref{tangent_space_1} has been verified. Similarly, we have
\begin{equation*}
T_{(x_0,R_0,0)}W^u((x_1,R_1,0),\varphi)=T_{x_0}W^u(x_1)\times T_{R_0}\mathcal{R}_k\times\mathbb{R}.
\end{equation*}
By the Morse-Smale condition, it follows the transversality condition of \eqref{trans_cond}
\begin{align*}
&T_{(x_0,R_0,0)}W^s((x_2,R_2,0),\varphi)+T_{(x_0,R_0,0)}W^u((x_1,R_1,0),\varphi)\\
&\qquad=(T_{x_0}W^u(x_1)+T_{x_0}W^s(x_2))\times T_{R_0}\mathcal{R}_k\times\mathbb{R}=\mathbb{R}^n\times T_{R_0}\mathcal{R}_k\times\mathbb{R}.
\end{align*}

Now we consider another case when $x(t)\equiv x_2$ and $R(t)\equiv R_2$.
By the condition on $\epsilon$ in \eqref{iHiSD_alpha}, it is obvious that
\begin{equation*}
\forall \alpha_0\in(0,1),\;\;
    \lim_{t\to-\infty}\varphi_t(x_2,R_2,\alpha_0)=(x_2,R_2,0),\
    \lim_{t\to+\infty}\varphi_t(x_2,R_2,\alpha_0)=(x_2,R_2,1).
\end{equation*}
That is, $W^u((x_2,R_2,0),\varphi)\cap W^s((x_2,R_2,1),\varphi)\neq\emptyset$. The transversality condition holds because $(x_2,R_2,1)$ is a stable equilibrium point proved by theorem \ref{thm_local}.

To sum up, the condition of Lemma \ref{lemma_lambda} holds for $\mathcal{M}_1=W^u((x_1,R_1,0),\varphi)$, $\mathcal{M}_2=W^s((x_2,R_2,1),\varphi)$, $p=(x_2,R_2,0)$
and we have proved the existence of iHiSD \eqref{iHiSD} trajectory from $(x_1,R_1,0)$ to $(x_2,R_2,1)$.
\end{proof}

It is important to note that the nonlocal convergence property does not apply to HiSD, which is quasi-Newton, as will be demonstrated numerically in Section \ref{sec:butterfly}. 
In contrast, this property is confirmed for iHiSD, as indicated by Theorem \ref{thm_nonlocal}, and is crucial for constructing the solution landscape. 

\begin{remark}
Theorem \ref{thm_nonlocal} is based on the existence of a gradient flow between the stationary points \(x_1\) and \(x_2\), which requires that their Morse indices should be distinct. Nevertheless, it is not required for these indices to differ by $\pm1$, a typical scenario in the Morse theory. 
When $x_1$ has a smaller index, the sign in $\pm$ in \eqref{iHiSD_x} should be $+$ for upward search, and vice versa. 
\end{remark}

\section{Discretization of iHiSD}
We consider the numerical discretization for iHiSD for the sake of implementation. In the last section,
Theorem \ref{thm_local} implies the stability of iHiSD \eqref{iHiSD} in locating a $k$-saddle. 
However, directly applying a general ODE solver is impractical, as the constraint $R\in\mathcal{R}_k$ may be violated during the iterations. 
Fortunately, since $\dim\ker(R+I)=k\le n$ (with $k\ll n$ in typical scenarios), we can instead track the orthonormal basis $\{v_i\}_{i=1}^k$ of the low-dimensional subspace $\ker(R+I)$, from which we can represent $R\in\mathcal{R}_k$ equivalently as $R=I-2\sum_{i=1}^kv_iv_i^T$, with $v_i^Tv_j=\delta_{ij}$.

Additionally, it has been shown that the formulation in \eqref{iHiSD_R} corresponds to optimizing $\|R-G\|_F^2$ via gradient descent. 
By expressing $R=I-2\sum_{i=1}^kv_iv_i^T$, the optimization problem can be reformulated as
\begin{equation}
    \min_{v_1,\cdots,v_k}\sum_{i=1}^k v_i^T G v_i,
    \quad\mathrm{s.t.}\;~v_i^Tv_j=\delta_{ij},
\end{equation}
which seeks the eigenspaces corresponding to the smallest $k$ eigenvalues of the Hessian $G(x)$, denoted as $\{v_i\}_{i=1}^k=\text{EigenSolve}_k\left(G\right)$ in the following discussion.
To solve this, we can employ methods such as the Simultaneous Rayleigh-Quotient Iterative Minimization \cite{RayleighQuotient} or the Locally Optimal Block Preconditioned Conjugate Gradient method \cite{LOBPCG}. Based on these approaches, the forward Euler discretization of iHiSD \eqref{iHiSD} is outlined in Algorithm \ref{alg_iHiSD}.

\begin{algorithm}[htbp]
\caption{iHiSD method for a $k$-saddle}
\label{alg_iHiSD}
\begin{algorithmic}
\Require
initial state $x_0$,
search direction $s\in\{\pm1\}$,
target index $k\in\mathbb{N}$,
ratio sequence $\alpha=(\alpha_m)_{m=0}^{\infty}\in\mathbb{R}^\mathbb{N}$,
tolerance $\varepsilon_F$.
\Ensure $k$-saddle $x^*$.
\State $m\leftarrow0,\;g_0\leftarrow\nabla E(x_0),\;G_0\leftarrow\nabla\nabla E(x_0)$.
\Repeat
\State $\left\{v_{m,i}\right\}_{i=1}^k\leftarrow\text{EigenSolve}_k\left(G_m\right)$.
\State $d_m\leftarrow((1-\alpha_m)s-\alpha_m)\cdot g_m+2\alpha_m\sum_{i=1}^k\langle v_{m,i},g_m\rangle v_{m,i}$.
\State $x_{m+1}\leftarrow x_m+\eta_m\cdot d_m$.
\State $g_{m+1}\leftarrow\nabla E(x_{m+1}),\;G_{m+1}\leftarrow\nabla\nabla E(x_{m+1})$.
\State $m\leftarrow m+1$.
\Until $\|g_m\|<\varepsilon_F$
\end{algorithmic}
\end{algorithm}

To establish the convergence of Algorithm \ref{alg_iHiSD}, we impose the following  settings similar to those used in optimization theory. 
For a symmetric matrix $A$, let $|A|$ represent the square root of $A^2$.

\begin{assumption}
    $x^*$ is a $k$-saddle, and $0<\mu\le\lambda_{\min}(|G(x^*)|)\le\lambda_{\max}(|G(x^*)|)\le L$.
\end{assumption}
\begin{assumption}
    $G(x)$ is $M$-Lipschitz continuous in the neighborhood $U(x^*,\delta)$, meaning that
    $$\|G(x)-G(y)\|_2\leq M\|x-y\|_2,~\forall x,y\in U(x^*,\delta).$$
\end{assumption}

With these assumptions, we can present the following theorem. 
\begin{theorem}
\label{localconv_discrete}
   Suppose the Assumptions 1-2 hold. Assume that after $m_0$ iterations, the condition \( \alpha_m \ge \frac{1 + \varepsilon}{2} > \frac{1}{2} \) holds for \(m > m_0\) and some $0<\varepsilon<1$, and  $$ r_{m_0} = \|x_{m_0} - x^*\|_2 < \min\left(\delta, \frac{2\mu\varepsilon}{3M}\right) .$$ Then, the  step size condition \( \eta_m = \frac{2}{L + \mu(2\alpha_m - 1)} \) ensures that the Algorithm \ref{alg_iHiSD} converges to \(x^*\) with a linear convergence rate of \( \frac{\kappa + \varepsilon}{\kappa + 3\varepsilon} \), where $\kappa=L/\mu$ denotes the condition number.
\end{theorem}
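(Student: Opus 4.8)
The plan is to set up a one-step contraction estimate on the error $r_m = \|x_m - x^*\|_2$ and then iterate it, using the structure of the search direction $d_m$ in Algorithm \ref{alg_iHiSD} together with the reflected-Newton geometry. First I would rewrite the update $x_{m+1} = x_m + \eta_m d_m$ by observing that the reflection across the bottom-$k$ eigenspaces turns the combined direction into a \emph{sign-corrected gradient}. Concretely, letting $R_m = I - 2\sum_{i=1}^k v_{m,i}v_{m,i}^T$ be the reflection built from the eigensolve output, the direction is $d_m = -\alpha_m R_m g_m + (1-\alpha_m)s\,g_m$ (matching \eqref{iHiSD_x} in the discrete setting). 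Since $R_m$ reflects exactly across the span of the $k$ smallest-eigenvalue eigenvectors of $G_m$, when $x_m$ is near $x^*$ and $\alpha_m>\tfrac12$ the operator $\alpha_m R_m - (1-\alpha_m)sI$ acts on each eigendirection of $G(x^*)$ with a sign that makes $d_m$ point \emph{downhill in $|G|$-geometry}, i.e. $d_m \approx -(\alpha_m\,\mathrm{sgn}(G) - (1-\alpha_m)s)\,g_m$, whose action along positive/negative eigenspaces of $G(x^*)$ has a consistent contracting sign once $2\alpha_m-1>0$.

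The key step is the one-step error bound. I would write $x_{m+1}-x^* = x_m - x^* + \eta_m d_m$ and Taylor-expand $g_m = \nabla E(x_m)$ around $x^*$, using $\nabla E(x^*)=0$, to get $g_m = \tilde G\,(x_m-x^*) + O(M r_m^2)$ for some averaged Hessian $\tilde G$ with $\|\tilde G - G(x^*)\|_2 \le M r_m$ by Assumption 2. The leading linear operator governing the step is then $I - \eta_m\bigl(\alpha_m R^* - (1-\alpha_m)sI\bigr)G(x^*)$, and on the eigenbasis of $G(x^*)$ this is diagonal with entries $1 - \eta_m\bigl((2\alpha_m-1)|\lambda_i|\bigr)$ after the reflection aligns signs (the $+$ and $-$ contributions combine to $(2\alpha_m-1)|\lambda_i|$). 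With the prescribed step $\eta_m = \tfrac{2}{L+\mu(2\alpha_m-1)}$ and the spectral bounds $\mu \le |\lambda_i| \le L$ from Assumption 1, each diagonal entry is bounded in absolute value by the optimal-step gradient-descent factor $\tfrac{L-\mu(2\alpha_m-1)}{L+\mu(2\alpha_m-1)}$. Substituting $2\alpha_m-1 \ge \varepsilon$ and $\kappa=L/\mu$ yields the contraction constant $\tfrac{\kappa-\varepsilon}{\kappa+\varepsilon}$ for the linear part, and I would then absorb the quadratic Taylor remainder $O(Mr_m^2)$ using the hypothesis $r_{m_0} < \tfrac{2\mu\varepsilon}{3M}$ to inflate the rate to the stated $\tfrac{\kappa+\varepsilon}{\kappa+3\varepsilon}$.

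Finally I would run an induction: assuming $r_m < \min(\delta, \tfrac{2\mu\varepsilon}{3M})$ and $\alpha_m\ge\tfrac{1+\varepsilon}{2}$ for $m>m_0$, the one-step bound gives $r_{m+1} \le \tfrac{\kappa+\varepsilon}{\kappa+3\varepsilon}\,r_m$, which is strictly less than $1$, so $x_{m+1}$ stays in $U(x^*,\delta)$ and the induction closes, yielding linear convergence at the claimed rate. The main obstacle I anticipate is controlling the \emph{eigenspace drift}: the reflection $R_m$ is built from the eigenvectors of $G_m=G(x_m)$, not of $G(x^*)$, so $R_m$ differs from the limiting reflection $R^*$. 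I would bound $\|R_m - R^*\|_2$ via the Davis--Kahan $\sin\Theta$ theorem (the eigengap $\lambda_{k+1}-\lambda_k$ is positive since $x^*$ is a nondegenerate $k$-saddle with distinct eigenvalues under Assumption A), obtaining $\|R_m-R^*\|_2 = O(r_m)$, and fold this perturbation into the same $O(Mr_m^2)$ remainder term so that it does not degrade the leading contraction factor. Verifying that this drift contributes only at the quadratic order — rather than spoiling the linear rate — is the delicate point, and it is precisely what the smallness hypothesis $r_{m_0}<\tfrac{2\mu\varepsilon}{3M}$ is calibrated to guarantee.
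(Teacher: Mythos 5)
Your overall skeleton (Taylor expansion around $x^*$ using $\nabla E(x^*)=0$, a one-step bound of the form $r_{m+1}\le \rho\,r_m + c\,r_m^2$, then an induction keeping the iterates in $U(x^*,\delta)$) is the same as the paper's, and your spectral bookkeeping for the linear part is sound: the eigenvalues of the sign-corrected operator lie in $[(2\alpha_m-1)\mu,\,L]$, so the prescribed step gives the factor $\frac{\kappa-(2\alpha_m-1)}{\kappa+(2\alpha_m-1)}\le\frac{\kappa-\varepsilon}{\kappa+\varepsilon}$. The genuine gap is in how you treat the reflection. You linearize around the limiting operator built from $R^*$ and $G(x^*)$ and propose to control $\|R_m-R^*\|_2$ by Davis--Kahan. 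That drift term enters the one-step bound multiplied by $\|g_m\|\lesssim L\,r_m$, so its contribution is of order $\eta_m\frac{Mr_m}{\mu}\cdot L\,r_m=\eta_m\kappa M r_m^2$, inflating the quadratic coefficient by a factor of order $\kappa$ compared with the paper's $\frac{3M}{\mu(\kappa+\varepsilon)}$. In this type of recursion the admissible initial radius is $q/c$ (where $q=\frac{2\varepsilon}{\kappa+\varepsilon}$ is the linear gain and $c$ the quadratic coefficient), so your analysis only yields convergence from a ball of radius of order $\frac{\mu\varepsilon}{\kappa M}$ — not from the stated $\frac{2\mu\varepsilon}{3M}$. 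The hypothesis is not ``calibrated'' to absorb eigenspace drift; it is calibrated to an argument in which there is no drift at all. The missing idea is that $R_m$ is built from the eigenvectors of the \emph{current} Hessian $G_m$: once $Mr_m<\mu$, Weyl's inequality shows $G_m$ has exactly $k$ negative eigenvalues, hence the identity $R_mG_m=|G_m|$ holds \emph{exactly}, and the operator $H_mG_m=\alpha_m|G_m|+(1-\alpha_m)sG_m$ can be bounded using only eigenvalue (not eigenvector) perturbation, $\mu-Mr_m\le|\lambda(G_m)|\le L+Mr_m$. This costs only an extra $\eta_m M r_m^2$ and produces precisely the constants in the statement; no Davis--Kahan argument is needed or wanted.

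A second, more technical flaw is in your closing induction: from $r_{m+1}\le\frac{\kappa-\varepsilon}{\kappa+\varepsilon}r_m+c\,r_m^2$ with $r_m<q/c$ you cannot conclude the per-step contraction $r_{m+1}\le\frac{\kappa+\varepsilon}{\kappa+3\varepsilon}\,r_m$. When $r_m$ is near the threshold $q/c$, the factor $(1-q)+c\,r_m$ is close to $1$, which exceeds $(1+q)^{-1}=\frac{\kappa+\varepsilon}{\kappa+3\varepsilon}$ since $(1+q)^{-1}-(1-q)=\frac{q^2}{1+q}<q$. What the induction legitimately gives is monotonicity $r_{m+1}\le r_m$, which keeps the iterates in the ball so the Lipschitz bound continues to apply. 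The claimed rate must then be extracted from the recursion by the standard lemma for sequences satisfying $r_{m+1}\le(1-q)r_m+c\,r_m^2$ with $r_{m_0}<q/c$, which yields $r_m\le\bigl(\tfrac{\kappa+\varepsilon}{\kappa+3\varepsilon}\bigr)^{m-m_0}\frac{q\,r_{m_0}}{q-c\,r_{m_0}}$ — geometric decay at the claimed ratio but with a multiplicative prefactor that blows up as $r_{m_0}\to q/c$, not a per-step contraction. This is exactly how the paper concludes, and your sketch needs the same device to be correct.
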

\begin{remark}
Note that as $\alpha\rightarrow 1^-$, which means that $\alpha$ tends to $1$ such that the iHiSD approaches the HiSD, the convergence rate proved in this theorem tends to $\frac{\kappa+1}{\kappa+3}$, which is exactly the convergence rate for HiSD proved in \cite[Theorem 5.2]{Luosiam2022}. Thus, the convergence rate proved in the above theorem consistently generalizes the result in \cite[Theorem 5.2]{Luosiam2022}.
\end{remark}
\begin{proof} 
Introduce the matrices as follows
$$R_m = I - 2\sum_{i=1}^k v_{m,i} v_{m,i}^T,\quad
H_m = \alpha_m R_m + (1 - \alpha_m) sI.$$ 
Here, $s\in\{\pm1\}$ represents the search direction in Algorithm \ref{alg_iHiSD}. 

To establish the convergence, we firstly prove by induction that the sequence $\{r_m := \|x_m - x^*\|_2\}$ is non-increasing such that the local Lipschitz condition could be applied. 
We begin our analysis by examining the recursive relation
\begin{align*}
    x_{m+1}-x^*&=x_m-x^*-\eta_m H_m(g_m-g^*) \qquad\text{(since $g^*=\nabla E(x^*)=0$)}\\
    &=x_m-x^*-\eta_m H_m \int_0^1 G(x^*+t(x_m-x^*))\cdot(x_m-x^*)dt\\
    &=(I-\eta_m H_m G_m)\cdot(x_m-x^*)\\
    &\qquad-\eta_m H_m\int_0^1[G(x^*+t(x_m-x^*))-G_m]dt\cdot(x_m-x^*).
\end{align*}
The induction hypothesis implies $x_m \in U(x^*, \delta)$, ensuring the Lipschitz continuity of $G(x)$. 
Given that $\|H_m\|_2\le\alpha_m\|R_m\|_2+(1-\alpha_m)\|sI\|_2=1$, we can derive
\begin{equation}
\label{contraction}
    r_{m+1}\le\|I-\eta_m H_m G_m\|_2\cdot r_m+\frac12\eta_mM\cdot r_m^2.
\end{equation}
Using the induction hypothesis we can deduce that $r_m\le r_{m_0}<\frac{2\mu\varepsilon}{3M}$, which leads to 
\begin{equation*}
    |\lambda_j(G_m)-\lambda_j(G(x^*))|\le\|G_m-G(x^*)\|_2\le Mr_m<\mu,~\forall j.
\end{equation*}
Thus, $G_m$ has exactly $k$ negative eigenvalues, implying that $R_mG_m=|G_m|$. 
Furthermore, the eigenvalue $ |\lambda(G_m)| $ can be bounded as
$$
\mu - Mr_m \leq |\lambda(G_m)| \leq L + Mr_m.
$$
Consequently, the eigenvalues of $ H_mG_m=\alpha_m|G_m|+(1-\alpha_m)sG_m$ have the following estimation
\begin{align*}
    \lambda(H_mG_m)&=\alpha_m|\lambda(G_m)|+(1-\alpha_m)s\lambda(G_m)\le|\lambda(G_m)|\le L+Mr_m,\\
    \lambda(H_mG_m)&=(1-\alpha_m)(|\lambda(G_m)|+s\lambda(G_m))+(2\alpha_m-1)|\lambda(G_m)|\\&\qquad\ge(2\alpha_m-1)(\mu-Mr_m).
\end{align*}
Therefore, the recursive inequality relationship in \eqref{contraction} leads to
\begin{align*}
    r_{m+1}&\le\max\{|1-\eta_m(L+Mr_m)|,|1-\eta_m(2\alpha_m-1)(\mu-Mr_m)|\}\cdot r_m+\frac12\eta_mM\cdot r_m^2\\
    &\le\max\{|1-\eta_mL|,|1-\eta_m(2\alpha_m-1)\mu|\}\cdot r_m+\frac32\eta_mM\cdot r_m^2.
\end{align*}
By choosing the optimal step size $\eta_m=2/(L+(2\alpha_m-1)\mu)$, it turns out that
\begin{align*}
\label{contraction2}
    r_{m+1}\le\frac{\kappa-(2\alpha_m-1)}{\kappa+(2\alpha_m-1)}\cdot r_m+\frac{3M}{\mu[\kappa+(2\alpha_m-1)]}\cdot r_m^2.
\end{align*}
Since $\alpha_m\ge\frac{1+\varepsilon}{2}$, we have 
$$
r_{m+1}\le\frac{\kappa-\varepsilon}{\kappa+\varepsilon}r_m+\frac{3M}{\mu(\kappa+\varepsilon)}r_m^2.
$$
Combining this with the induction hypothesis $r_m\le r_{m_0}<\frac{2\mu\varepsilon}{3M}$ ensures that $r_{m+1}\le r_m$, thereby completing the inductive procedure.

With the assistance of the above result, the linear convergence result is based on the following property, whose details can be found in \cite{ConvexOpt}
\begin{equation*}
\begin{cases}
    0\le r_{m+1}\le(1-q)r_m+cr_m^2\\
    0\le r_{m_0}<q/c
\end{cases}\implies
r_m\le\min\left\{r_{m_0},\frac{(1+q)^{-(m-m_0)}q r_{m_0}}{q - c r_{m_0}}\right\}.
\end{equation*}
Taking $q=\frac{2\varepsilon}{\kappa+\varepsilon}, c=\frac{3M}{\mu(\kappa+\varepsilon)}$, we can conclude that when $r_{m_0}<\min(\delta,\frac{2\mu\varepsilon}{3M})$,
\begin{equation*}
    \|x_m-x^*\|_2\le\left(\frac{\kappa+\varepsilon}{\kappa+3\varepsilon}\right)^{m-m_0}\cdot\frac{\|x_{m_0}-x^*\|_2}{1-\frac{3M}{2\mu\varepsilon}\|x_{m_0}-x^*\|_2},\quad\forall m>m_0,
\end{equation*}
which completes the proof.
\end{proof}

\section{Constructing solution landscape by iHiSD}
We intend to construct the solution landscape based on the Algorithm \ref{alg_iHiSD}. Despite growing applications of the solution landscape in various fields, whether a solution landscape with a finite number of stationary points could be completely constructed by certain algorithms, a critical issue in providing the comprehensive description for complex systems, still remains unclear. With the help of iHiSD method, we could now partly answer this question (in Remark \ref{zzz}) based on the following theorem. 
\begin{theorem}
\label{thm_complete}
Given any two stationary points $\hat{x}$ and $y$ of $E$ satisfying the Assumption A, there exists a sequence of stationary points $x_0, x_1, \ldots, x_n$, such that $x_0 = \hat{x}$, $x_n = y$, and each consecutive pair $x_{i-1}, x_i$ is connected via the iHiSD method.
\end{theorem}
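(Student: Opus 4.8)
The plan is to reduce the statement to a purely gradient-flow connectivity question and then settle that question with Morse theory, using Theorem~\ref{thm_nonlocal} to upgrade each gradient-flow link into an iHiSD link. Throughout I work on $\mathcal{M}=\mathbb{R}^n$, which is connected, so by the compactness of the sublevel sets in Assumption~A the Morse homology theorem applies to $(\mathcal{M},E)$ and gives $H_0(\mathcal{M};\mathbb{Z}_2)\cong\mathbb{Z}_2$. The first reduction I would record is that Theorem~\ref{thm_nonlocal}, together with its accompanying remark on the sign of $\pm\nabla E$, converts the existence of a gradient flow between two stationary points of distinct index into the existence of an iHiSD trajectory joining them, in either the upward or the downward direction. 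Hence it suffices to connect $\hat{x}$ and $y$ by a finite chain of stationary points whose consecutive members are joined by a gradient flow.

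\textbf{Step 1 (descent to a minimum).} I would first show that every stationary point is iHiSD-connected to some minimum. Given a $k$-saddle $x^*$ with $k\ge 1$, pick a point on $W^u(x^*)\setminus\{x^*\}$ and flow it forward; compactness of the sublevel sets forces convergence to a stationary point $c$, and the Morse--Smale property (the gradient flow moves from higher index to strictly lower index) guarantees $\operatorname{ind}(c)<k$. This produces a gradient flow from $x^*$ to a strictly lower-index stationary point, which by Theorem~\ref{thm_nonlocal} is an iHiSD link. Since the index is a nonnegative integer that strictly decreases along each link, iterating reaches a $0$-saddle after finitely many steps, so $x^*$ is iHiSD-connected to a minimum $m(x^*)$.

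\textbf{Step 2 (connectivity of the minima).} This is where Morse homology enters. By the Morse homology theorem, $H_0(C^*(E))=C_0(E)/\operatorname{im}\partial_1\cong H_0(\mathcal{M};\mathbb{Z}_2)\cong\mathbb{Z}_2$. For each $1$-saddle $a$ the unstable manifold $W^u(a)$ is one-dimensional, so its two rays flow down to two (not necessarily distinct) minima $b_i,b_j$, giving $\partial_1 a=b_i+b_j$ over $\mathbb{Z}_2$. Consequently $\operatorname{im}\partial_1$ is exactly the cut space of the graph $\Gamma$ whose vertices are the minima and whose edges are the $1$-saddles; the dimension of $C_0(E)/\operatorname{im}\partial_1$ then equals the number of connected components of $\Gamma$, and its being $1$ means $\Gamma$ is connected. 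Thus any two minima $m_1,m_2$ are joined by a path $m_1=b_0,b_1,\dots,b_\ell=m_2$ in $\Gamma$, and for each consecutive pair the shared $1$-saddle $a_s$ yields an upward iHiSD link $b_{s-1}\rightsquigarrow a_s$ and a downward iHiSD link $a_s\rightsquigarrow b_s$ via Theorem~\ref{thm_nonlocal} with the appropriate sign, so $m_1$ and $m_2$ are iHiSD-connected through the intermediate $1$-saddles.

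\textbf{Assembly and the main obstacle.} Concatenating the descent $\hat{x}\rightsquigarrow m(\hat{x})$ of Step~1, the minimum-to-minimum path $m(\hat{x})\rightsquigarrow m(y)$ of Step~2, and the reverse of the descent $m(y)\rightsquigarrow y$, I obtain the required finite sequence of stationary points with every consecutive pair iHiSD-connected. The point I expect to be the main obstacle is the translation carried out in Step~2: verifying that each $1$-saddle contributes $\partial_1 a=b_i+b_j$ and that $\dim_{\mathbb{Z}_2}\bigl(C_0(E)/\operatorname{im}\partial_1\bigr)$ counts the components of $\Gamma$; this rests on Morse--Smale transversality ensuring that the two ends of $W^u(a)$ limit to genuine minima and that no degeneracy spoils the mod-$2$ count. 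A secondary care is to confirm that every link invoked above pairs stationary points of distinct index, as required by Theorem~\ref{thm_nonlocal}; this is automatic, since each link changes the index by exactly one in Step~2 and strictly decreases it in Step~1.
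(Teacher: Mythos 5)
Your proposal is correct and follows the same overall architecture as the paper's proof: connect every stationary point to a local minimum by gradient flow, connect any two minima through $1$-saddles using Morse homology ($H_0(\mathbb{R}^n;\mathbb{Z}_2)\cong\mathbb{Z}_2$), and use Theorem \ref{thm_nonlocal} to upgrade each gradient-flow link into an iHiSD link. The differences are local in technique. For the descent to a minimum, the paper produces a \emph{single} gradient flow from $p$ directly to a minimum by a measure-theoretic argument: since transversality gives $\dim W^u(p)\cap W^s(q)=\operatorname{Index}(p)-\operatorname{Index}(q)$, Sard's theorem shows $\bigcup_{\operatorname{Index}(q)\geq 1}W^u(p)\cap W^s(q)$ has measure zero in $W^u(p)$, so almost every point of $W^u(p)$ flows to a minimum; you instead iterate one-step descents with strictly decreasing index, which is more elementary, avoids Sard, and is equally valid since the theorem allows a chain of intermediate stationary points. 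For the connectivity of minima, your graph-components reading of $C_0/\operatorname{im}\partial_1\cong\mathbb{Z}_2$ is mathematically the same as the paper's dimension count showing $\operatorname{im}\partial_1=\left\{\sum c_i z_i\in C_0:\sum c_i=0\right\}$; if anything, your phrasing makes explicit the final combinatorial step (extracting an actual alternating chain of minima and $1$-saddles from $a+b=\partial_1\sum_i q_i$) that the paper leaves implicit. One cosmetic quibble: the span of the edge boundaries $\{b_i+b_j\}$ is not standardly called the ``cut space'' (that object lives in the edge space $\mathbb{Z}_2^{E}$), but the property you actually use --- that the quotient of the vertex space by this span has dimension equal to the number of connected components --- is correct.
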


\begin{remark}\label{zzz}
 Theorem \ref{thm_complete} indicates that starting from any known stationary point, one could apply the Algorithm \ref{alg_iHiSD} to thoroughly explore and connect other stationary points such that the solution landscape with a finite number of stationary points could be completely constructed in theory.
\end{remark}

\begin{proof}
The proof of Theorem \ref{thm_complete} could be divided into two parts.
First, we prove that any stationary point $p$ can be connected to some local minimum $q$ via gradient flow.
Indeed, since the sublevel set $\{x:E(x)\le c\}$ is compact for any $c\in\mathbb{R}$, we have the decomposition $\mathbb{R}^n=\bigcup_q W^s(q)$, where the union is taken over all stationary points of $E$.
By transverlity condition, it holds that 
$$
\dim W^u(p)\cap W^s(q)=\dim W^u(p)+\dim W^s(q)-n=\operatorname{Index}(p)-\operatorname{Index}(q).
$$
Applying Sard's theorem \cite{Sard1942TheMO}, we can infer that $\bigcup_{\operatorname{Index}(q)\geq1}W^u(p)\cap W^s(q)$ is a set of zero measure in $W^u(p)$. 
This implies the existence of a stationary point $q$ with index 0 (i.e., a local minimum) such that
$$
W^u(p)\cap W^s(q)\neq\emptyset,
$$
which, together with the definitions of $W^u(p)$ and $ W^s(q)$, in turn implies that there exists a gradient flow from $p$ to $q$.

Next, we prove that any two local minima $a, b$ can be connected via a series of gradient flows between stationary points arranged alternately with 1-saddles and 0-saddles.
Morse homology theorem states that $\ker\partial_0/\mathrm{im}\partial_1\cong\mathbb{Z}_2$,
and thus $\dim_{\mathbb{Z}_2}\ker\partial_0=1+\dim_{\mathbb{Z}_2}\mathrm{im}\partial_1$.
Since $W^u(q)$ is one-dimensional for any $1$-saddle $q$, it must connects to exactly two (maybe the same) local minima, i.e., $\partial_1q=0$ or $r+r'$.
Take the sum to obtain
$\mathrm{im}\partial_1\subset\left\{\sum c_i z_i\in C_0:\sum c_i=0\right\}$ and
$$\dim_{\mathbb{Z}_2}\mathrm{im}\partial_1\leq\dim_{\mathbb{Z}_2}\left\{\sum c_i z_i\in C_0:\sum c_i=0\right\}=\dim_{\mathbb{Z}_2} C_0-1=\dim_{\mathbb{Z}_2}\mathrm{im}\partial_1.$$
This implies that
$$\mathrm{im}\partial_1=\left\{\sum c_i z_i\in C_0:\sum c_i=0\right\},$$
and hence $a+b\in\mathrm{im}\partial_1$, i.e., $a+b=\partial_1\sum_{i=1}^kq_i$.
By definition of $\partial_1$, we have found the desired series of $1$-saddles,
as well as the $0$-saddles between them.

Overall, we can connect any two stationary points, by first connecting them to the local minima, and then connecting with alternative $1$-saddles and minima.
\end{proof}
\begin{remark}
Although the proof mainly relies on the local minima and the $1$-saddles, in practice it is more efficient to construct the solution landscape based on saddle points with larger indices since they have unstable manifolds with higher dimension, and thus might be connected to more saddle points by iHiSD.
\end{remark}

With the assistance of Theorem \ref{thm_complete}, we could iteratively apply Algorithm \ref{alg_iHiSD} starting from known stationary points to discover additional stationary points in order to systematically construct the solution landscape, as outlined in the following algorithm.
\begin{algorithm}[H]
\caption{iHiSD framework for constructing solution landscape}
\label{alg_sl}
\begin{algorithmic}
\Require A known stationary point $\hat{x}$ (potentially a minimum obtained via gradient descent).
\Ensure The solution landscape $\mathcal{G}=(\mathcal{V},\mathcal{E})$.
\State Initialize the vertex set: $\mathcal{V} \leftarrow \{\hat{x}\}$.
\State Initialize the edge set: $\mathcal{E} \leftarrow \emptyset$.
\Repeat
\State Select a stationary point $x\in \mathcal{V}$ with index $l$ and perturb it to obtain $x_\delta$.
\State Specify the target index $k\neq l$.
\State Set the direction variable $s=+1$ if $l < k$, otherwise $s=-1$.
\State Choose an initial ratio $\alpha_0\gtrsim0$.
\State Execute Algorithm \ref{alg_iHiSD} with initial value $(x_\delta,\alpha_0)$, terminating at a $k$-saddle $y$.
\State Update the vertex set: $\mathcal{V}\leftarrow \mathcal{V}\cup\{y\}$.
\State Update the edge set: $\mathcal{E}\leftarrow \mathcal{E}\cup\{(x,y)\}$.
\Until No new stationary points are found
\end{algorithmic}
\end{algorithm}

Finally, it is worth mentioning that although the Algorithm $\ref{alg_sl}$ may require an extensive and thorough search between stationary points with different Morse indices, it often suffices to limit the search between adjacent indices. The reason is that for a $k$-saddle $x$ ($k \geq 1$), there exists another $(k\pm 1)$-saddle $y$ such that $x$ and $y$ are connected through a gradient flow. Specifically, if there is no $(k-1)$-saddle connecting $x$, the Morse's homology theorem implies that $x \in \ker \partial_k = \mathrm{im} \partial_{k+1}$ such that $x$ is connected to some $(k+1)$-saddle.

\section{Numerical Experiments}
In this section, we conduct two illustrative examples by comparing different methods to demonstrate the advantages of  the iHiSD in locating saddle points and constructing complex solution landscapes. 

\subsection{A two-dimensional example}
\label{sec:butterfly}
It has been pointed out in \cite{Ortner} that the  attractive region of the GAD method is significantly influenced by points characterized by diagonal Hessian matrices. 
As a generalization of GAD to higher indices, HiSD also faces this limitation, and simply implementing upward and downward searches with HiSD may restrict the thorough exploration of the solution landscape. 
In contrast, the iHiSD provides a possibility of circumventing this constraint.  To give a clear perspective, we conduct tests using the following two-dimensional energy function
\begin{equation*}
\label{func_butterfly}
    E(x,y)=x^4-2x^2+y^4+y^2-1.5x^2y^2+x^2y-cy^3,
\end{equation*}
where $c$ is a parameter. While locating the index-$1$ saddle point, the HiSD degenerates to the GAD such that the iHiSD method is a crossover dynamics from the gradient flow to the GAD.

\begin{figure}[H]
    \centering
    \includegraphics[width=0.5\linewidth]{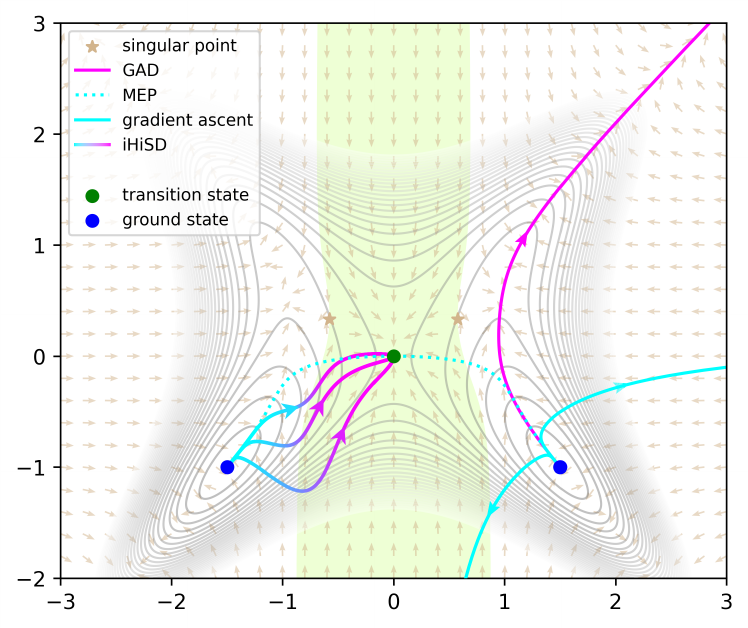}
    \caption{Comparison of different dynamics for searching saddle points starting from a local minimum for $c=1$.
    The GAD method could not cross the boundary of the region of attraction, while the gradient ascent method diverges rapidly due to numerical instability. 
  In contrast, the iHiSD method could traverse this boundary to locate the saddle point under different initial values of $\alpha$.}
    \label{fig:butterfly_1}
\end{figure}

In Figure \ref{fig:butterfly_1}, we select $c=1$ and compare different methods, where there are two symmetric ground states (marked in blue) and a transition state (marked in green) between them. 
The brown arrows represent the GAD vector field generated by
\begin{equation*}
    -\left(I-2v(x)v(x)^T\right)\nabla E(x),
\end{equation*}
where $v(x)$ is the normalized eigenvector corresponding to the smallest eigenvalue of $G(x)$.
The points where the Hessian matrices are diagonal, marked by stars, are singular with respect to the GAD vector field and define the boundary of the region of attraction (highlighted in green) associated with the saddle point, as discussed in \cite{Ortner}. 
Thus, one could observe that the GAD trajectory could not cross the boundary of the region of attraction to reach the saddle point.
It is worth mentioning that this boundary arises from the mechanism of the GAD algorithm, rather than any inherent property of the underlying energy function. 
Nevertheless, this boundary severely limits the effectiveness of GAD, especially when the initial condition falls outside this boundary, preventing the system from fully exploring the space. Furthermore, the  gradient ascent method starting from a local minimum quickly diverges due to numerical instability.

The iHiSD intends to address these limitations through a dynamic parameter $\alpha(t)$, which transitions smoothly from $\alpha=0$ (gradient flow) to $\alpha=1$ (GAD) over time, as indicated by the crossover change of the color in the lines of iHiSD in Figure \ref{fig:butterfly_1}.
Specifically, we select $\alpha(t)$ in \eqref{iHiSD_alpha} as $\dot{\alpha}=\beta\cdot2\alpha(1-\alpha)$ where $\beta=1$.
The trajectories start from perturbations of the ground state $x^*$ such that $|x(0)-x^*|=10^{-2}$ under different initial values of $\alpha(t)$, i.e. $\alpha(0)=10^{-11},10^{-9},10^{-7}$.
 Figure \ref{fig:butterfly_1} indicates that the integration of both the gradient flow and the GAD allows iHiSD to bypass the boundaries of GAD under different initial values of $\alpha(t)$, enabling a more thorough and stable exploration of the space.

With the above demonstrations, we could now construct the solution landscape--whether from a local minimum or a local maximum--using iHiSD, as illustrated in Figure \ref{fig:butterfly_SL}, which demonstrates the ability of iHiSD in traversing the GAD boundary of the region of attraction to construct the solution landscape.

\begin{figure}[H]
    \begin{minipage}{0.48\textwidth}
        \includegraphics[width=\linewidth]{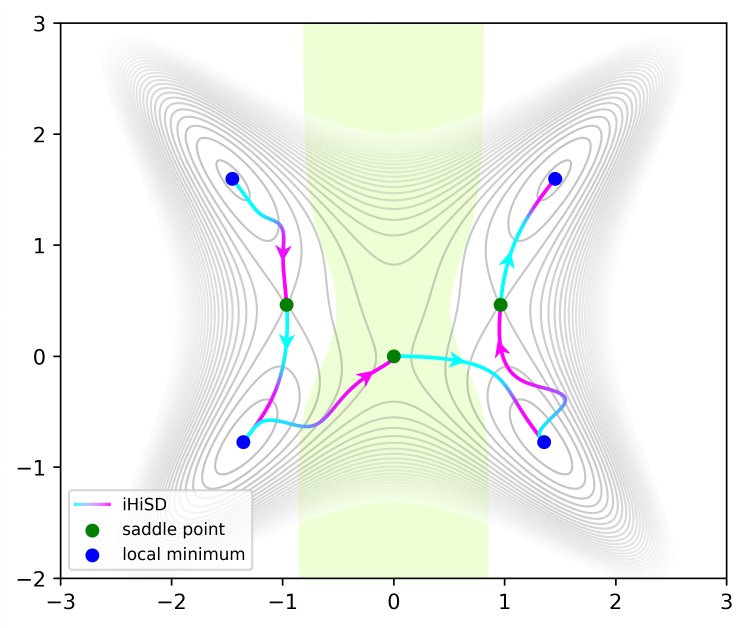}
    \end{minipage}
    \begin{minipage}{0.48\textwidth}
        \includegraphics[width=\linewidth]{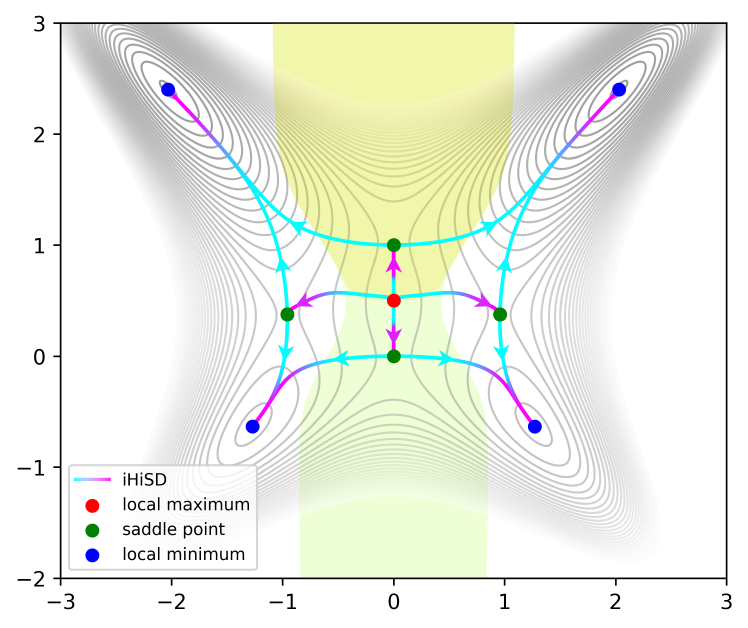}
    \end{minipage}
    \caption{Construction of solution landscape with iHiSD starting from a minimum ($c=1.5$) (left) and  a maximum ($c=2$) (right).}
    \label{fig:butterfly_SL}
\end{figure}

\subsection{Particle Clustering with Morse Potential}
The Morse potential approximates diatomic interactions and is given by the dimensionless formula
$$ V(r) = e^{-2a(r-1)} - 2e^{-a(r-1)}, $$
where $ a > 0 $ denotes the rigidity parameter. 
Notably, for $ a = 6 $, using the approximation $ r \approx 1 + \ln(r) $, the Morse potential simplifies to the well-known Lennard-Jones potential (see Chapter 11 in \cite{Atkins2006}):
$$ V_{LJ}(r) = r^{-12} - 2r^{-6}. $$
This relationship underscores the connection between these potentials and their significance in modeling diatomic interactions.
We consider a planar cluster of $N$ identical particles, with the internal energy expressed as
\begin{equation*}
    E(r_1,\cdots,r_N)=\sum_{1\le i<j\le N}V\left(\|r_i-r_j\|\right),\quad r_i\in\mathbb{R}^2.
\end{equation*}
Although $E$ is not inherently a Morse function on $(\mathbb{R}^2)^N$ due to its translation and rotation invariance, we could consider the quotient space 
where $E$ remains a Morse function. 
To avoid any ambiguity, we will use the term \textit{pattern} to refer to the equivalence class of $(r_1, \ldots, r_N)$ in $(\mathbb{R}^2)^N$.

\begin{figure}[H]
    \centering
    \includegraphics[width=0.6\linewidth]{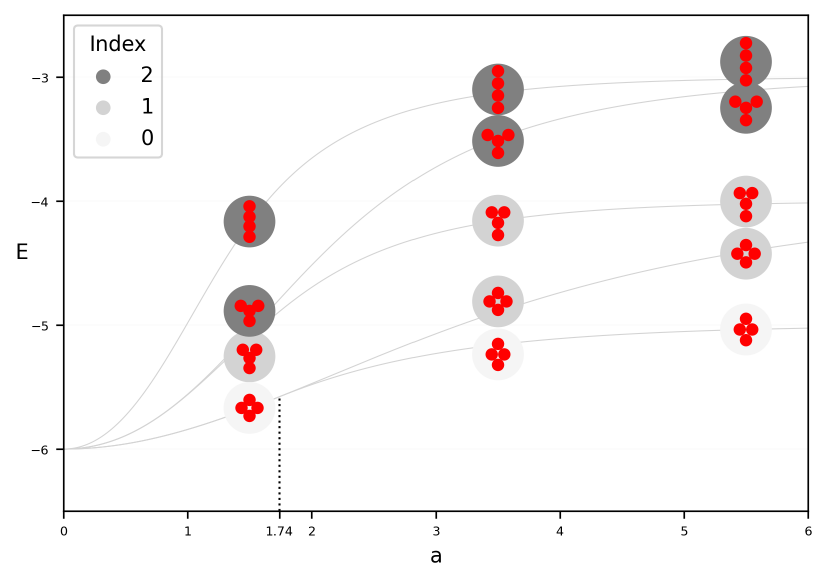}
    \caption{Stationary patterns of Morse potential for different parameter $a$.
    The supercritical pitchfork bifurcation occurs at $a\approx 1.74$.}
    \label{fig:bifurcation}
\end{figure}

For $N=4$, it is straightforward to exhaustively identify all stationary patterns for various values of the parameter $a$, as illustrated in Figure \ref{fig:bifurcation}.
When $a<1.74$, there are 4 stationary patterns: the line and star patterns are $2$-saddles, the fork pattern is a $1$-saddle, and the square pattern is the minimum.
As $a$ increases, the particles tend to aggregate less compactly.
When $a$ exceeds about $1.74$, the attractive potential causes the square pattern to bifurcate into an index-1 pattern, with the diamond pattern emerging as the new minimum, resulting in five stationary patterns.
With greater rigidity, the patterns become more regular, with each particle maintaining an approximate distance of 1 from some others, corresponding to the stable distance of diatomic interactions.

Using the iHiSD method, we construct solution landscapes for the particle clustering with $N=4$ under different parameters in Figures \ref{fig:sl1.5}--\ref{fig:sl6}. 
Although the particles are identical, they are marked with different colors to enhance the visualization of transition pathways. 
In particular, the connections indicated by dashed lines are not captured by the HiSD method but successfully identified by the iHiSD method, which again substantiates the effectiveness of the iHiSD method.

\begin{figure}[H]
    \centering
    \includegraphics[width=\linewidth]{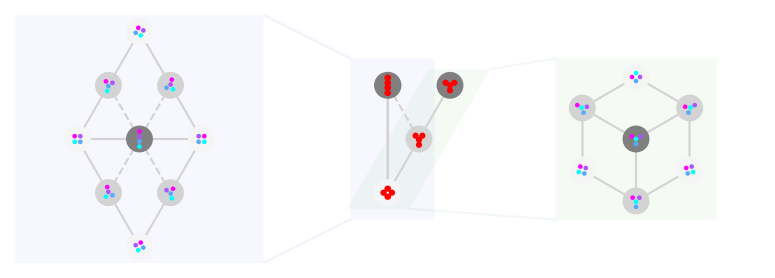}
    \caption{Solution landscape for the particle clustering with $a=1.5$.}
    \label{fig:sl1.5}
\end{figure}

\begin{figure}[H]
    \centering
    \includegraphics[width=\linewidth]{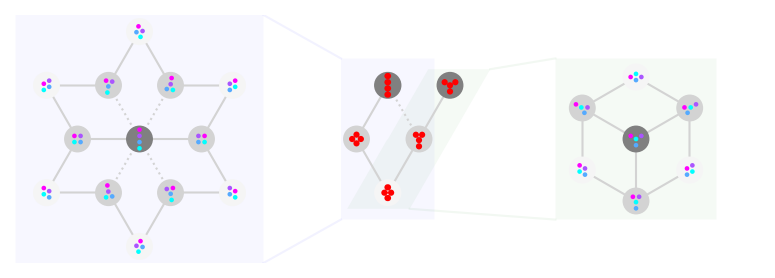}
    \caption{Solution landscape  for the particle clustering with  $a=6$.}
    \label{fig:sl6}
\end{figure}

\section{Concluding remarks}

In this paper, we propose and analyze the iHiSD, which enhances the traditional HiSD by integrating the gradient flow.
Our analysis shows that the HiSD operates as a quasi-Newton method and is limited by the restricted exploration region, which may result in an incomplete solution landscape. 
In contrast, the iHiSD with appropriate weight functions $\beta_1(t)$ and $\beta_2(t)$ could exhibit a stable and nonlocal convergence to the target saddle point, which distinguishes the iHiSD from traditional HiSD. In particular,
the iHiSD enables successive searches between saddle points with the assistance of the Morse theory, which helps to construct a more complete solution landscape. 
Different methods are compared in numerical experiments to substantiate the effectiveness of the proposed iHiSD method.

A potential improvement of the current method is to develop an adaptive strategy for iHiSD, which will adjust the dynamics of weight functions based on the current state and the intrinsic properties of the energy functions. 
This aims to improve the capability of the method in constructing complex energy landscapes and thus exploring complex systems.
\section*{Acknowledgments}
We thank Shuonan Wu for helpful discussions.

\bibliographystyle{siam}
\bibliography{ref}

\begin{thebibliography}{10}

\bibitem{Atkins2006}
{\sc Peter Atkins and Julio de~Paula}, {\em Physical Chemistry for the Life
  Sciences}, Oxford University Press, Oxford, UK, 1nd~ed., 2006.

\bibitem{Morse_homology}
{\sc Augustin Banyaga and David Hurtubise}, {\em Lectures on Morse Homology},
  Springer, 2004.

\bibitem{string}
{\sc Weinan E, Weiqing Ren, and Eric Vanden-Eijnden}, {\em String method for
  the study of rare events}, Phys. Rev. B., 66 (2002), p.~052301.

\bibitem{stringM2007}
{\sc Weinan E, Weiqing Ren, and Eric Vanden-Eijnden}, {\em Simplified and
  improved string method for computing the minimum energy paths in
  barrier-crossing events}, J. Chem. Phys., 126 (2007), p.~164103.

\bibitem{gad}
{\sc Weinan E and Xiang Zhou}, {\em The gentlest ascent dynamics},
  Nonlinearity, 24 (2011), pp.~1831--1842.

\bibitem{GaoLen}
{\sc Weiguo Gao, Jing Leng, and Xiang Zhou}, {\em An iterative minimization
  formulation for saddle point search}, SIAM J. Numer. Anal., 53 (2015),
  pp.~1786--1805.

\bibitem{GouOrt}
{\sc N.~Gould, C.~Ortner, and D.~Packwood}, {\em A dimer-type saddle search
  algorithm with preconditioning and linesearch}, Math. Comp., 85 (2016),
  pp.~2939--2966.

\bibitem{GuZho}
{\sc Shuting Gu and Xiang Zhou}, {\em Convex splitting method for the
  calculation of transition states of energy functional}, J. Comput. Phys., 353
  (2018), pp.~417--434.

\bibitem{dimer}
{\sc G~Henkelman and H~Jonsson}, {\em A dimer method for finding saddle points
  on high dimensional potential surfaces using only first derivatives}, J.
  Chem. Phys., 111 (1999), pp.~7010--7022.

\bibitem{nudged}
{\sc G~Henkelman, BP~Uberuaga, and H~Jonsson}, {\em A climbing image nudged
  elastic band method for finding saddle points and minimum energy paths}, J.
  Chem. Phys., 113 (2000), pp.~9901--9904.

\bibitem{Jnsson1998NudgedEB}
{\sc Hannes J{\'o}nsson, Gregory Mills, and Karsten~Wedel Jacobsen}, {\em
  Nudged elastic band method for finding minimum energy paths of transitions},
  Classical and Quantum Dynamics in Condensed Phase Simulations, 1998,
  pp.~385--404.

\bibitem{LOBPCG}
{\sc Andrew~V. Knyazev}, {\em Toward the optimal preconditioned eigensolver:
  Locally optimal block preconditioned conjugate gradient method}, SIAM J. Sci.
  Comput., 23 (2001), pp.~517--541.

\bibitem{WittenLaplacians}
{\sc Tony Leli\`{e}vre and Panos Parpas}, {\em Using witten laplacians to
  locate index-1 saddle points}, SIAM J. Sci. Comput., 46 (2024),
  pp.~A770--A797.

\bibitem{Ortner}
{\sc Antoine Levitt and Christoph Ortner}, {\em Convergence and cycling in
  walker-type saddle search algorithms}, SIAM J. Numer. Anal., 55 (2017),
  pp.~2204--2227.

\bibitem{li2024acta}
{\sc Lu~Li, Bing Yu, Pengyue Gao, Jian Lv, Lei Zhang, Yanchao Wang, and Yanming
  Ma}, {\em Representing crystal potential energy surfaces via a
  stationary-point network}, Acta Mater., 281 (2024), p.~120403.

\bibitem{LiuXieSCM}
{\sc Wei Liu, Ziqing Xie, and Wenfan Yi}, {\em Normalized {W}olfe-{P}owell-type
  local minimax method for finding multiple unstable solutions of nonlinear
  elliptic {PDE}s}, Sci. China Math., 66 (2023), pp.~2361--2384.

\bibitem{LiuXie}
\leavevmode\vrule height 2pt depth -1.6pt width 23pt, {\em Nonmonotone local
  minimax methods for finding multiple saddle points}, J. Comput. Math., 42
  (2024), pp.~851--884.

\bibitem{LiuChe}
{\sc Xuanyu Liu, Huajie Chen, and Christoph Ortner}, {\em Stability of the
  minimum energy path}, Numer. Math., 156 (2024), pp.~39--70.

\bibitem{RayleighQuotient}
{\sc D.E. Longsine and S.F. McCormick}, {\em Simultaneous rayleigh-quotient
  minimization methods for ${A}x=\lambda {B}x$}, Lin. Alg. Appl., 34 (1980),
  pp.~195--234.

\bibitem{Luosiam2022}
{\sc Yue Luo, Xiangcheng Zheng, Xiangle Cheng, and Lei Zhang}, {\em Convergence
  analysis of discrete high-index saddle dynamics}, SIAM J. Numer. Anal., 60
  (2022), pp.~2731--2750.

\bibitem{Morse_Milnor}
{\sc John~W. Milnor}, {\em Morse Theory}, Princeton University Press, 1963.

\bibitem{ConvexOpt}
{\sc Yurii Nesterov}, {\em Lectures on Convex Optimization}, Springer, 2018.

\bibitem{Sard1942TheMO}
{\sc Arthur Sard}, {\em The measure of the critical values of differentiable
  maps}, Bull. Amer. Math. Soc., 48 (1942), pp.~883--890.

\bibitem{shi2024}
{\sc Baoming Shi, Yucen Han, Apala Majumdar, and Lei Zhang}, {\em
  Multistability for nematic liquid crystals in cuboids with degenerate planar
  boundary conditions}, SIAM J. Appl. Math., 84 (2024), pp.~756--781.

\bibitem{shi2023nonlinearity}
{\sc Baoming Shi, Yucen Han, Jianyuan Yin, Apala Majumdar, and Lei Zhang}, {\em
  Hierarchies of critical points of a landau-de gennes free energy on
  three-dimensional cuboids}, Nonlinearity, 36 (2023), p.~2631.

\bibitem{shi2022siap}
{\sc Baoming Shi, Yucen Han, and Lei Zhang}, {\em Nematic liquid crystals in a
  rectangular confinement: Solution landscape, and bifurcation}, SIAM J. Appl.
  Math., 82 (2022), pp.~1808--1828.

\bibitem{ODE_Teschl}
{\sc Gerald Teschl}, {\em Ordinary Differential Equations and Dynamical
  Systems}, American Mathematical Society, 2012.

\bibitem{Wang_Acta2021}
{\sc Wei Wang, Lei Zhang, and Pingwen Zhang}, {\em Modelling and computation of
  liquid crystals}, Acta Numer., 30 (2021), pp.~765--851.

\bibitem{protein1}
{\sc Peter~G. Wolynes}, {\em Energy landscapes and solved protein-folding
  problems}, Philosophical Transactions of the Royal Society A: Mathematical,
  Physical and Engineering Sciences, 363 (2005), pp.~453--464.

\bibitem{superconductivity}
{\sc Xintian Wu}, {\em Saddle point solution of the landau-ginzburg-wilson
  hamiltonian for the mccoy-wu ising model}, Phys. Rev. B, 97 (2018),
  p.~224422.

\bibitem{Yin_Cell2024}
{\sc Jianyuan Yin, Zhen Huang, Yongyong Cai, Qiang Du, and Lei Zhang}, {\em
  Revealing excited states of rotational {B}ose-{E}instein condensates},
  Innov., 5 (2024), p.~100546.

\bibitem{Yin_PNAS2021}
{\sc Jianyuan Yin, Kai Jiang, An-Chang Shi, Pingwen Zhang, and Lei Zhang}, {\em
  Transition pathways connecting crystals and quasicrystals}, PNAS., 118
  (2021), p.~e2106230118.

\bibitem{HiSD_PRL}
{\sc Jianyuan Yin, Yiwei Wang, Jeff Z.~Y. Chen, Pingwen Zhang, and Lei Zhang},
  {\em Construction of a pathway map on a complicated energy landscape}, Phys.
  Rev. Lett., 124 (2020), p.~090601.

\bibitem{yin2019high}
{\sc Jianyuan Yin, Lei Zhang, and Pingwen Zhang}, {\em High-index
  optimization-based shrinking dimer method for finding high-index saddle
  points}, SIAM J. Sci. Comput., 41 (2019), pp.~A3576--A3595.

\bibitem{ShrinkingDimer}
{\sc Lei Zhang, Qiang Du, and Zhenzhen Zheng}, {\em Optimization-based
  shrinking dimer method for finding transition states}, SIAM J. Sci. Comput.,
  38 (2016), pp.~A528--A544.

\bibitem{zhang2022sinum}
{\sc Lei Zhang, Pingwen Zhang, and Xiangcheng Zheng}, {\em Error estimates for
  {Euler} discretization of high-index saddle dynamics}, SIAM J. Numer. Anal.,
  60 (2022), pp.~2925--2944.

\bibitem{csiam2023}
{\sc Lei Zhang, Pingwen Zhang, and Xiangcheng Zheng}, {\em Mathematical and
  numerical analysis to shrinking-dimer saddle dynamics with local lipschitz
  conditions}, CSIAM Trans. Appl. Math., 4 (2023), pp.~157--176.

\end{thebibliography}

\end{document}